\newtheorem{theorem}{Theorem}[section]
\newtheorem{lemma}[theorem]{Lemma}
\newtheorem{definition}[theorem]{Definition}
\newtheorem{proposition}[theorem]{Proposition}
\newtheorem{remark}[theorem]{Remark}
\newcommand\RR{{{\mathbb R}}}
\newcommand{\eps}{\varepsilon}
\newcommand{\reff}[1]{(\ref{#1})}
\newcommand{\p}{\partial}
\newcommand{\La}{\Delta}
\newcommand{\beq}{\begin{eqnarray*}}
\newcommand{\enq}{\end{eqnarray*}}
\newcommand{\beqq}{\begin{eqnarray}}
\newcommand{\enqq}{\end{eqnarray}}
\newcommand{\ben}{\begin{equation}\label}
\newcommand{\enn}{\end{equation}}
\newcommand{\bef}{\begin{proof}}
\newcommand{\enf}{\end{proof}}
\begin{document}
\title
{Blow-up criterion and examples of global solutions of forced Navier-Stokes equations}

\author[ Di Wu]
{ Di Wu}
\date{}
\address{\noindent \textsc{Di Wu, Institute de Math\'ematiques de Jussieu-Paris Rive Gouche UMR CNRS 7586, Univerist\'e Paris Diderot, 75205, Paris, France}}
\email{di.wu@imj-prg.fr}

\keywords{Navier-Stokes equation, Besov class,  long-time behavior, regularity}

\begin{abstract}
In this paper we first show a blow-up criterion for solutions to the Navier-Stokes equations with a time-independent force by using the profile decomposition method. Based on the orthogonal properties related to the profiles, we give some examples of global solutions to the Navier-Stokes equations with a time-independent force, whose initial data are large.
 
\end{abstract}

\maketitle

\section{Introduction}
We consider the incompressible Navier-Stokes equations with a time independent external force in $\RR^3$,
\beq
(NSf)~~\left\{
  \begin{array}{ll}
   \p_t u_f-\La u_f+u_f\cdot\nabla u_f=f-\nabla p,\\
   \nabla\cdot u=0,\\
   u_f|_{t=0}=u_0 
  \end{array}
\right.
\enq
for $(t,x)\in(0,T)\times\RR^3$, where $u_f$ is the velocity vector field, $f(x)$ is the given external force defined in $\RR^3$ and $p(t,x)$ is the associated pressure function. In this paper, we study the blow-up criterion for $(NSf)$.

\subsection{Blow-up problem in critical spaces}
To put our results in perspective, we first recall the Navier-Stokes equations (without external force) blow-up problem in critical spaces.  Consider 
 the Navier-Stokes system: 
\beq
(NS)~~\left\{
  \begin{array}{ll}
   \p_t u-\La u+u\cdot\nabla u=-\nabla \pi,\\
   \nabla\cdot u=0,\\
   u_f|_{t=0}=u_0 
  \end{array}
\right.
\enq
where $u(t,\cdot):\RR^3\to \RR^3$ is the unknown velocity field.\\
The spaces $X$ appearing in the chain of continuous embeddings
\beq
\dot{H}^{\frac{1}{2}}\hookrightarrow L^3\hookrightarrow\dot{B}^{-1+\frac{3}{p}}_{p,q}\hookrightarrow\dot{B}^{-1+\frac{3}{p'}}_{p',q'}, ~(3<p\leq p'<\infty,3<q\leq q'<\infty)
\enq 
are all critical with respect to the Navier-Stokes scaling in that $\|u_{0,\lambda}\|_X\equiv\|u\|_X$ for all $\lambda>0$, where $u_{0,\lambda}:=\lambda u(\lambda x)$ is the initial data which evolves as $u_{\lambda}:=\lambda u(\lambda^2 t,\lambda x)$, as long as $u_0$ is the initial data for the solution $u(t,x)$. While the larger spaces $\dot{B}^{-1+\frac{3}{p}}_{p,\infty}$, $\mathrm{BMO}^{-1}$ and $\dot{B}^{-1}_{\infty,\infty}$ are also critical spaces and global well-posedness is known for the first two for small enough initial data in those spaces thanks to \cite{CM2,KT2,FP2} (but only for finite $p$ in the Besov case, see \cite{BP2}), the ones in the chain above guarantee the existence and uniqueness of local-in-time solutions for any initial data. Specifically, there exist corresponding spaces $X_T=X_T((0,T)\times\RR^3)$ such that for any $u_0\in X$, there exists $T>0$ and a unique strong solution $u\in X_T$ to the corresponding Duhamel-type integral equation,
\beqq\label{Duhamel}
\begin{split}
u(t)&=e^{t\La }u_0-\int_0^te^{(t-s)\La }\mathbb{P}\nabla\cdot(u(s)\otimes u(s))ds\\
&=e^{t\La }u_0+B(u,u),
\end{split}
\enqq 
where
\beq
&(v\otimes w)_{j,k}:=v_j w_k,~~[\nabla\cdot(v\otimes w)]:=\sum_{k=1}^3\p_k(v_j w_k)~\\
&\mathrm{and}~\mathbb{P}v:=v+\nabla(-\La)^{-1}(\nabla\cdot v),
\enq 
which results from applying the projection onto divergence-free vector fields operator $\mathbb{P}$ on $(NS)$ and solving the resulting nonlinear heat equation. Moreover, $X_T$ is such that any $u\in X_T$ satisfying $(NS)$ belongs to  $ C([0,T],X)$. Setting
\beq
T_{X_T}^*(u_0):=\sup\{T>0|\exists !u:=NS(u_0)\in X_T~\mathrm{solving}~(NS)\}
\enq 
the Navier-Stokes blow-up problem is:\\
\textbf{Question:}
\beq
\mathrm{Does}\sup_{0<t<T^*_{X_T}(u_0)}\|u(t,\cdot)\|_X<\infty~~\mathrm{imply}~~\mathrm{that}~~ T^*_{X_T}(u_0)=\infty?
\enq 
In the important work \cite{ESS12} of Escauriaza-Seregin-Šverák, it was established that for $X=L^3(\RR^3)$, the answer is yes. This extended a result in the foundational work of Leray \cite{L2} regarding the blow-up of $L^p(\RR^3)$ norms at a singularity with $p>3$, and of the ``Ladyzhenskaya-Prodi-Serrin'' type mixed norms $L^s_t(L^p_x),~\frac{2}{s}+\frac{3}{p}=1,~p>3$, establishing a difficult ``end-point'' case of those results. In \cite{gkp12}, based on the work \cite{KK2}, I. Gallagher, G. S. Koch, F. Planchon gave an alternative proof this result in the setting of strong solutions using the method of ``critical elements'' of C. Kenig and F. Merle. In \cite{gkp2}, I. Gallagher, G. S. Koch, F. Planchon  extended the method in \cite{gkp12} to give a positive answer to the above question for $X=\dot{B}_{p,q}^{-1+\frac{3}{p}}(\RR^3)$ for all $3<p,q<\infty$ (see Definition \ref{besov}). Also in \cite{DA2}, D. Albritton proved a stronger blow-up criterion in $\dot{B}^{s_p}_{p,q}$ for $3<p,q<\infty$ and his proof is based on elementary splitting arguments and energy estimates. 

We recall the main steps of the method of ``critical elements'': assume the above question's answer is no for some $X$ and define 
\beq
\infty>A_c:=\inf\{\sup_{t\in[0,T^*_{X_T}(u_0))}\|NS(u_0)(t)\|_{X}\Big|u_0\in X~\mathrm{with}~T^*_{X_T}(u_0)<\infty\},
\enq  
where $NS(u_0)$ is a solution to $(NS)$ belonging to $C([0,T^*_{X_T}(u_0),X)$ with initial data $u_0\in X$.
And define the set of initial data generating ``critical elements''(possibly empty) as follows:
\beq
\mathcal{D}_c:=\{u_0\in X|T^*(u_0)<\infty,~\sup_{t\in[0,T^*(u_0))}\|NS(u_0)\|_{X}=A_c\}.
\enq 
The main steps are:
\begin{enumerate}
	\item If $A_c<\infty$, then $\mathcal{D}_c$ is non empty.
	\item If $A_c<\infty$, then any $u_0\in\mathcal{D}_c$ satisfies $NS(u_0)(t)\to0$ in $\mathcal{S}'$ as $t\nearrow T^*(u_0)$.
	\item If $A_c<\infty$, by backward uniqueness of the heat equation (see \cite{ESS22} ), for any $u_0\in\mathcal{D}_c$, there exists a $t_0\in(0,T^*(u_0))$ such that $NS(u_0)(t_0)=0$, which contradicts to the fact that $A_c<\infty$.
\end{enumerate}

In this paper, we consider the blow-up problem for the Navier-Stokes equation with a time-independent external force $f$, where $\La^{-1}f$ is small in $L^3$ and the initial data belongs to $L^3(\RR^3)$. 

According to Theorem \ref{well-posed-L3}, we know that there exists a universal constant $c>0$ such that, if the given external force satisfies  $\|\La^{-1}f\|_{L^3}<c$, then for any initial data $u_0\in L^3$, there exists a unique maximal time $T^*(u_0,f)>0$ and a unique  solution to $(NSf)$ $u_f$ belonging to $\mathcal{C}([0,T]; L^3((\RR^3))$ for any $T<T^*$ with initial data $u_0$. Again by Theorem \ref{well-posed-L3}, we have that if $T^*(u_0,f)=\infty$, then $u_f\in C([0,\infty),L^3(\RR^3))\cap L^{\infty}(\RR_+,L^3(\RR^3))$, and if $T^*(u_0)<\infty$, we have for any $p>3$ and $2<r<\frac{2p}{p-3}$,
\beq
\lim_{t\to T^*(u_0,f)}\|u_f-U_f\|_{\mathcal{L}^{r}([0,t],\dot{B}_{p,p}^{s_p+\frac{2}{r}})}=\infty,
\enq  
where $U_f\in L^3$ is the unique small steady-state solution to $(NSf)$ (for existence and uniqueness of small steady-state solution, see \cite{bbis2}) and the function space $\mathcal{L}^{r}_t(\dot{B}_{p,p}^{s_p+\frac{2}{r}})$ is defined in Definition \ref{time-besov}. However, the above criterion is on the corresponding perturbation solution instead of solution $u_f$.

In this paper, we give the following blow-up criterion for $(NSf)$: Let $\La^{-1}f$ be small in $L^3$, then
\beq
(BC)~~~\limsup_{0<t<T^*(u_0,f)}\|u_f(t,\cdot)\|_{L^3}<\infty\Rightarrow T^*(u_0,f)=\infty.
\enq  
We use a profile decomposition for the solutions to $(NSf)$ to prove the above result. Precisely, the decomposition enables us to construct a connection between the forced  and the unforced equation, which provides the blow-up information from the unforced solution to the forced solution. More precisely, we can decompose $u_f$  in a form  consisting of the sum of profiles of solutions to $(NS)$, a solution to $(NSf)$ and a remainder. We show that the blow-up information of $u_f$ is determined by the blow-up information of the profiles of solutions to $(NS)$ by an argument using the scaling  property of those solutions.  
    Compared with the ``critical element'' roadmap, we avoid using backward uniqueness of the heat equation (which is only true for the unforced case).  We also mention that the method used in \cite{DA2} can not be applied to our forced case, because the proof of \cite{DA2} relies on the following scaling property: if $u$ is solution to $(NS)$ with initial data $u_0$, then $\lambda u(\lambda^2 t,\lambda x )$  is also a solution to $(NS)$ with initial data $\lambda u_0(\lambda\cdot)$. However the above scaling property is not true for the Navier-Stokes equation with a time-independent force $f$ satisfying $\La ^{-1}f\in L^3$. In fact, for any solution $u_f$ to $(NSf)$ with initial data $u_0$, $\lambda u_f(\lambda^2 t,\lambda x )$ is  no longer a solution to $(NSf)$, unless $f$ is self-similar (which means $f(t,x)\equiv \lambda^3f(\lambda^2 t,\lambda x)$), hence does not satisfy $\La^{-1} f\in L^{3}$.
    (And his proof still relies on the backwards uniqueness of heat equation.)

We also point out that one can obtain a profile decomposition of solutions to the forced Navier-Stokes equation with an  external force $f\in\mathcal{L}^{r}(\RR_+,\dot{B}^{s_p+\frac{2}{r}-2}_{p,p})$ (Definition \ref{time-besov})  with $s_p+\frac{2}{r}>0$ and initial data bounded in $\dot{B}^{s_p}_{p,p}$ for any $3<p<\infty$ with a similar proof as in \cite{gkp12}. And by the same argument as the proof of Theorem \ref{Blow-up}, one can show the blow-up criterion as $(BC)$ by replacing $L^3$ by $\dot{B}^{s_p}_{p,p}$.

\subsection{Global Solutions to the Forced Navier-Stokes Equations}

The second topic of this paper is about the global solution to the incompressible Navier-Stokes equation with a small external force. 

As we mention before, $(NS)$ has a global solution if the initial data is small enough in the critical initial data space $L^3$, $\dot{B}^{s_p}_{p,\infty}$ or $\mathrm{BMO}^{-1}$. According to embedding $L^3\hookrightarrow\dot{B}^{s_p}_{p,\infty}(p<\infty)\hookrightarrow\mathrm{BMO}^{-1}\hookrightarrow\dot{B}^{-1}_{\infty,\infty}$ and the fact that $\dot{B}^{-1}_{\infty,\infty}$ is the critical initial data space for $(NS)$, we mention that all of these global wellposedness results require the initial data small enough in $\dot{B}^{-1}_{\infty,\infty}$.

Let us point out that none of the results mentioned are specific to $(NS)$, as they do not use the special structure of the nonlinear term in $(NS)$. In \cite{CG} J.-Y Chemin and I. Gallagher proved the global wellposedness result of $(NS)$ under a nonlinear smallness assumption on the initial, which may hold despite the fact that the data is large in $\dot{B}^{-1}_{\infty,\infty}$. 

Our purpose is different. Once an initial $u_0\in L^3$ generates a global solution to $(NS)$, we want  to construct a global solution to $(NSf)$ with a scaled enough initial data $\lambda^{-1}u_0(\lambda\cdot)$. This is done  by using the perturbation equation of $(NSf)$ and the orthogonal property of scales/cores. Hence we prove that for any initial data $u_0\in L^3$ (could be large in $\dot{B}^{-1}_{\infty,\infty}$)  generating a global solution to $(NS)$, after scaling, generates a global solution to $(NSf)$

The rest of this article is structured as follows. In Section 3, we give the proofs Theorem \ref{Blow-up} and Theorem \ref{examples}. Section 4 is devoted to showing the profile decomposition of solutions to $(NSf)$. In Section 5, a perturbation result for $(NS)$ is stated in an appropriate functional setting which provides the key estimate of Section 4. Finally in the Appendixs, we recall some well-posedness results for $(NSf)$ and the corresponding steady-state equation. Also we collect  standard Besov space estimates used throughout the paper in it.

\section{Notation and Statement of the Result}
Let us first recall the definition of Besov spaces, in dimension $d\geq1$.
\begin{definition}\label{besov}
	Let $\phi$ be a function in $\mathcal{S}(\RR^d)$ such that $\hat{\phi}=1$ for $|\xi|\leq 1$ and $\hat{\phi}=0$ for $|\xi|>2$, and define $\phi_j:=2^{dj}(2^{j}x)$. Then the frequency localization operators are defined by 
	\beq
	S_j:=\phi_j*\cdot,~~\La _j:=S_{j+1}-S_j.
	\enq 
	Let $f$ be in $\mathcal{S}'(\RR^d)$. We say $f$ belongs to $\dot{B}^{s}_{p,q}$ if 
	\begin{enumerate}
      \item the partial sum $\sum_{j=-m}^{m}\La_j f$ converges to $f$ as a tempered distribution if $s<\frac{d}{p}$ and after taking the quotient with polynomials if not, and
      \item 
      \beq
      \|f\|_{\dot{B}^{s}_{p,q}}:=\|2^{js}\|\La_j f\|_{L^p_x}\|_{\ell^q_j}<\infty.
      \enq 
    \end{enumerate}
\end{definition}

We refer to \cite{CL2} for the introduction of the following type of space in the context of the Navier-Stokes equations. 
\begin{definition}\label{time-besov}
	Let $u(\cdot,t)\in\dot{B}^{s}_{p,q}$ for a.e. $t\in (t_1,t_2)$ and let $\La_j$ be a frequency localization with respect to the $x$ variable (see Definition \ref{besov}). We shall say that $u$ belongs to $\mathcal{L}^\rho([t_1,t_2],\dot{B}^{s}_{p,q})$ if 
	\beq
	\|u\|_{\mathcal{L}^\rho([t_1,t_2],\dot{B}^{s}_{p,q})}:=\|2^{js}\|\La_j u\|_{L^{\rho}([t_1,t_2]L^p_x})\|_{\ell^q_j}<\infty.
	\enq 
\end{definition}
Note that for $1\leq\rho_1\leq q\leq\rho_2\leq\infty$, we have
\beq
L^{\rho_1}([t_1,t_2],\dot{B}^{s}_{p,q})\hookrightarrow\mathcal{L}^{\rho_1}([t_1,t_2],\dot{B}^{s}_{p,q})\hookrightarrow\mathcal{L}^{\rho_2}([t_1,t_2],\dot{B}^{s}_{p,q})\hookrightarrow L^{\rho_2}([t_1,t_2],\dot{B}^{s}_{p,q}).
\enq
Let us introduce the following notations (introduced in \cite{gkp2}): we define $s_p:=-1+\frac{3}{p}$ and 
\beq
\mathbb{L}^{a:b}_{p}(t_1,t_2):=\mathcal{L}^{a}([t_1,t_2];\dot{B}^{s_p+\frac{2}{a}}_{p,p})\cap\mathcal{L}^{b}([t_1,t_2];\dot{B}^{s_p+\frac{2}{b}}_{p,p}),
\enq 
\beqq
\mathbb{L}^a_{p}:=\mathbb{L}^{a:a}_{p},~\mathbb{L}^{a:b}_{p}(T):=\mathbb{L}^{a:b}_{p}(0,T)~~
\mathrm{and}~~\mathbb{L}^{a:b}_{p}[T<T^*]:=\cap_{T<T^*}\mathbb{L}^{a:b}_{p}(T).
\enqq 
\begin{remark}\label{notation}
	We point out that according to our notations, $u\in\mathbb{L}^{a:b}_{p,q}[T<T^*]$ merely means that $u\in\mathbb{L}^{a:b}_{p,q}(T)$ for any $T<T^*$ and does not imply that $u\in\mathbb{L}^{a:b}_{p,q}(T^*)$(the notation does not imply any uniform control as $T\nearrow T^*$).
\end{remark}
Now let us state our main result.
\begin{theorem}[Blow-up Criterion]\label{Blow-up}
    Suppose that $\|\La^{-1} f\|_{L^3}<c$, where $c$ is the small universal constant in Theorem \ref{well-posed-L3}. 
	Let $u_0\in L^3(\RR^3)$ be a divergence free vector field and $u_f=NSf(u_0)\in C([0,T^*(u_0,f)),L^{3}(\RR^3))$, where $T^*(u_0,f)$ is the maximal life span of $u_f$,  be the unique strong solution of $(NSf)$ with initial data $u_0$. If $T^*(u_0,f)<\infty$, then 
	\beqq\label{blow-up}
	\limsup_{t\to T^*(u_0,f)}\|u(t)\|_{L^3(\RR^3)}=\infty.
	\enqq 
\end{theorem}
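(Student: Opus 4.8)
The plan is to argue by contradiction and to use the profile decomposition of Section 4 to transfer the finite-time singularity onto one of the \emph{unforced} profiles, whereupon the already-established $L^3$ blow-up criterion for $(NS)$ of \cite{ESS12, gkp12} furnishes the contradiction. Thus suppose the conclusion fails, so that there exist initial data $u_0$ with $T^*:=T^*(u_0,f)<\infty$ and $M:=\sup_{t\in[0,T^*)}\|u_f(t)\|_{L^3}<\infty$. By the blow-up characterization in Theorem \ref{well-posed-L3} it then suffices to prove that $u_f\in\mathbb{L}^{a:b}_{p}(0,T^*)$ for some admissible pair $(a,b)$ with $3<p<\infty$, since finiteness of this critical space-time norm up to $T^*$ is incompatible with the maximality of $T^*$.

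First I would pick $t_n\nearrow T^*$ and apply the profile decomposition to the $L^3$-bounded family $\{u_f(t_n)\}$, obtaining on intervals approaching $T^*$ an expansion $u_f=\sum_{j=1}^{J}U^{(j)}_n+\tilde u^{J}_{f,n}+w^{J}_{n}$, in which each $U^{(j)}_n$ is a rescaling (by orthogonal scales and cores $(\lambda_n^{(j)},x_n^{(j)})$) of a solution of the \emph{unforced} equation $(NS)$, the term $\tilde u^{J}_{f,n}$ solves $(NSf)$, and $w^{J}_{n}$ is negligible in the critical norm as $J,n\to\infty$. The structural point I would verify, using the scaling identity $\La^{-1}f_\lambda=\lambda\,(\La^{-1}f)(\lambda\,\cdot)$ for $f_\lambda(x):=\lambda^3 f(\lambda x)$, is that along any nontrivial scale (namely $\lambda_n\to0$ or $\lambda_n\to\infty$, or $|x_n|\to\infty$) the rescaled force tends weakly to zero; this is exactly why the concentrating profiles solve $(NS)$ rather than $(NSf)$, and it uses crucially that $\La^{-1}f\in L^3$ is \emph{not} self-similar.

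The $(NSf)$-piece $\tilde u^{J}_{f,n}$ is generated by data small in the critical norm once all concentrations have been removed, so by the small-data part of Theorem \ref{well-posed-L3} (together with the hypothesis $\|\La^{-1}f\|_{L^3}<c$) it is global with finite $\mathbb{L}^{a:b}_{p}$ norm and cannot produce the singularity. For the profiles, the asymptotic (Pythagorean) orthogonality of the $L^3$ norms bounds $\sup_t\|U^{(j)}\|_{L^3}$ by $M$, so each $U^{(j)}$ is a solution of $(NS)$ whose $L^3$ norm stays bounded on its maximal interval. Here I would invoke the $L^3$ blow-up criterion for $(NS)$ of \cite{ESS12, gkp12} as a black box: a bounded $L^3$ norm forces each profile to be global with finite critical space-time norm, and this is precisely the step that lets us dispense with the backward-uniqueness argument of the unforced theory.

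It then remains to reassemble: summing the finitely many globally-controlled profiles, the global $(NSf)$-piece, and the negligible remainder, and absorbing the cross terms via the orthogonality of $(\lambda_n^{(j)},x_n^{(j)})$, should yield a bound on $\|u_f\|_{\mathbb{L}^{a:b}_{p}(0,T)}$ uniform in $T<T^*$, hence $u_f\in\mathbb{L}^{a:b}_{p}(0,T^*)$ and the contradiction with Theorem \ref{well-posed-L3}. The main obstacle I anticipate is exactly this nonlinear superposition: to pass from control of the individual $U^{(j)}$ and of $\tilde u^{J}_{f,n}$ to control of $u_f$ itself, one must show that the Duhamel evolution of the sum differs from the sum of the evolutions only by terms vanishing as $J,n\to\infty$. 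This is a stability/perturbation statement for $(NSf)$ about a superposition of orthogonally-scaled profiles---the content of Section 5---and making the interaction errors small requires a careful use of the orthogonality of the $(\lambda_n^{(j)},x_n^{(j)})$ together with the smallness of $\|\La^{-1}f\|_{L^3}$.
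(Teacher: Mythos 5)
Your skeleton --- decompose $u_f(t_n)$ for $t_n\nearrow T^*(u_0,f)$ into profiles, note that profiles carried by nontrivial scales/cores solve the \emph{unforced} equation, and quote the $L^3$ criterion of \cite{ESS12,gkp12} as a black box --- is indeed the paper's skeleton, but two central steps of your plan are unjustified, and they are precisely the points where the paper's proof does something different. First, the forced piece: you claim that $\tilde u^{J}_{f,n}$ ``is generated by data small in the critical norm once all concentrations have been removed,'' hence global by small-data theory. Nothing gives this smallness: the datum of the forced profile is the weak limit $\phi_1$ of $u_f(t_n)$, which may be arbitrarily large in $L^3$; a statement of the form ``the weak limit along a blow-up sequence is small (or vanishes)'' is exactly the deep step of the unforced critical-element theory (proved there via backward uniqueness of the heat equation), which is unavailable here and which this paper is specifically designed to avoid. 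The paper instead removes the forced profile from consideration by a scaling/life-span argument that is absent from your proposal: after reordering so that profile $1$ minimizes the rescaled life spans $\lambda_{j,n}^2T^*_j$, Theorem \ref{profile-NSf} gives the lower bound $\lambda_{1,n}^2T^*_1\leq T^*(u_f(t_n),f)=T^*(u_0,f)-t_n\to 0$, hence $\lambda_{1,n}\to 0$; since the forced profile is the one with scale $\equiv 1$, the life-span-controlling profile is necessarily unforced, $U^1=NS(\phi_1)$, with $T^*_1<\infty$. No claim about the forced profile being global is ever needed.

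Second, your Pythagorean step asserts $\sup_t\|U^{(j)}(t)\|_{L^3}\leq M$ for \emph{every} profile on its whole maximal interval. But the decomposition of the solution (hence the orthogonality expansion of Proposition \ref{orth-L3}) is valid only for $t\leq\tau_n=\min_{j\in I}\lambda_{j,n}^2T^j$, so profile $j$ can only be tested at times $s\leq\tau_n/\lambda_{j,n}^2$, which for the non-minimizing profiles stays strictly below $T^*_j$ (and may even shrink to $0$); the full-interval bound, hence globality via the ESS criterion, is obtainable only for the life-span-minimizing profile. Fortunately that single profile suffices, and this also makes your reassembly step (your self-identified ``main obstacle'') unnecessary: one never needs $u_f\in\mathbb{L}^{a:b}_{p}(0,T^*)$, nor the blow-up characterization \reff{uf-Uf} (which, incidentally, concerns $u_f-U_f$ rather than $u_f$). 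The paper's contradiction is local to profile $1$: either $I=\emptyset$, in which case Theorem \ref{profile-NSf} makes $u_n$ global, contradicting $T^*(u_0,f)-t_n<\infty$; or $I\neq\emptyset$, and then Proposition \ref{orth-L3} gives $M^3\geq\|U^1(s)\|^3_{L^3}+\eps(n,s)$ for all $s<T^*_1$, which contradicts $\limsup_{s\to T^*_1}\|U^1(s)\|_{L^3}=\infty$ from the unforced criterion. (Formally the paper routes this through the critical value $A_c$ and the emptiness of $\mathcal{D}_c$ in Proposition \ref{critical-elements}, but the mechanism inside that proof is the one just described.) To repair your write-up you would need to replace the smallness claim by the vanishing-scale argument and restrict the orthogonality-plus-ESS step to the minimizing profile.
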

\begin{remark}
	
Our profile decomposition method is not only valid for a time-independent force, but also can be extended to more general time-dependent external force. For example: our method is valid for  solutions belonging to $C([0,T^*),L^3(\RR^3))$  constructed in \cite{CP2} with initial $u_0\in L^3$, where the external force $f$ can be written as $f=\nabla\cdot V$ and $\sup_{0<t<\infty}t^{1-\frac{3}{p}}\|V\|_{L^{\frac{p}{2}}}$ is small enough for some $3<p\leq 6$. 
Actually our method only depends on the smallness of $U_f$ and the continuity in time of solutions in space $L^3$, which are similar ($U_f$ can by replaced by some small solution with small initial data in $L^3$ constructed in \cite{CP2}) with  the solutions in \cite{CP2}, whose associated force is time-dependent. After that we can obtain $(BC)$ for any fixed small external force as above by a similar argument of the case that $f$ is time independent.
\end{remark}

Under the smallness assumption on the given external force, the following result is an example for the existence of global solution to $(NSf)$ whose initial data is large.

\begin{theorem}[Examples of Global Solutions]\label{examples}
Suppose that the external force $f$ is given and $\|\La^{-1}f\|_{L^3}<c_1$, where $c_1$ is a universal small enough positive constant in Proposition \ref{large-linear}.  Let $u_0\in L^3$ be a divergence free vector field and its corresponding solution $u$ to $(NS)$ belongs to $ C([0,\infty),L^3(\RR^3))$. Then there exist $\lambda>0$ depending on $u_0$ and $f$ such that
\beqq
u_{f}:=NSf(u_{0,\lambda})\in C([0,\infty),L^3(\RR^3)),
\enqq  
where $u_{0,\lambda}(\cdot)=\lambda^{-1}u_0(\lambda^{-1}\cdot)$.
\end{theorem}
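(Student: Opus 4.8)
The plan is to realize the forced solution as a perturbation of the rescaled unforced solution and to absorb the force by an orthogonality-of-scales argument. First I would record that since $u=NS(u_0)\in C([0,\infty),L^3)$ is global, the well-posedness theory of the Appendix places $u$ in a global Kato-type path space: there are admissible exponents with $u\in\mathbb{L}^{a:b}_{p}(0,\infty)$ and $M:=\|u\|_{\mathbb{L}^{a:b}_{p}(0,\infty)}<\infty$ (on a large interval $[0,T_0]$ this is the local theory, while for $t\ge T_0$ one uses that $\|u(T_0)\|_{L^3}$ is small by decay, so that $u$ is a small global solution there). The crucial feature is scale invariance of the critical norm: if $u_\lambda(t,x):=\lambda^{-1}u(\lambda^{-2}t,\lambda^{-1}x)$, then $u_\lambda=NS(u_{0,\lambda})$ is again a global solution of $(NS)$ with $u_\lambda|_{t=0}=u_{0,\lambda}$, and $\|u_\lambda\|_{\mathbb{L}^{a:b}_{p}(0,\infty)}=M$ for every $\lambda>0$.

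Next I would set up the perturbation ansatz $u_f=u_\lambda+w$. Since $u_f$ solves $(NSf)$ and $u_\lambda$ solves $(NS)$ with the \emph{same} initial data $u_{0,\lambda}$, the linear heat evolutions of the data cancel, so the remainder satisfies $w|_{t=0}=0$ and the Duhamel-type equation $w=F-B(w,w)-B(u_\lambda,w)-B(w,u_\lambda)$, where $B$ is the bilinear operator of \eqref{Duhamel} and $F(t):=\int_0^t e^{(t-s)\La}\mathbb{P}f\,ds=(-\La)^{-1}(\mathrm{Id}-e^{t\La})\mathbb{P}f$ is the response to the time-independent force. The two genuinely forcing contributions to $w$ are thus $F$ and the quadratic coupling with the background $u_\lambda$. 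Since $F$ is controlled by $(-\La)^{-1}\mathbb{P}f$, one gets $\|F\|_{\mathbb{L}^{a:b}_{p}(0,\infty)}\lesssim\|\La^{-1}f\|_{L^3}<c_1$ uniformly in $\lambda$, so the force response is small.

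The heart of the argument is then a contraction for $w$ in $\mathbb{L}^{a:b}_{p}(0,\infty)$, and the main obstacle is precisely the coupling $B(u_\lambda,w)+B(w,u_\lambda)$: because $\|u_\lambda\|_{\mathbb{L}^{a:b}_{p}}=M$ does not shrink as $\lambda\to\infty$, the naive bilinear bound only yields $\|B(u_\lambda,w)\|\lesssim M\|w\|$, which is useless unless $M$ is small. Here I would invoke the orthogonality of scales: the force response $F$ (hence the leading part of $w$) is concentrated at the unit scale of $f$, whereas $u_\lambda$ lives at scale $\lambda$, so a refined, frequency-localized bilinear estimate should give $\|B(u_\lambda,w)\|_{\mathbb{L}^{a:b}_{p}}+\|B(w,u_\lambda)\|_{\mathbb{L}^{a:b}_{p}}\le\eta(\lambda)\,\|w\|_{\mathbb{L}^{a:b}_{p}}$ with $\eta(\lambda)\to0$ as $\lambda\to\infty$ — exactly the decoupling between a fixed scale-$1$ profile and a scale-$\lambda$ background that underlies the profile decomposition. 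This is where the perturbation result of Section 5 and the universal constant $c_1$ of Proposition \ref{large-linear} enter: for $\lambda$ large enough the Lipschitz constant of $w\mapsto F-B(w,w)-B(u_\lambda,w)-B(w,u_\lambda)$ on a small ball of $\mathbb{L}^{a:b}_{p}(0,\infty)$ is $<1$, so the contraction mapping principle produces a global $w$, whence $u_f=u_\lambda+w\in C([0,\infty),L^3)$.

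I expect the delicate step to be quantifying $\eta(\lambda)$: one must show that the interaction of a spread-out, scale-$\lambda$ velocity with a scale-$1$ perturbation is small in the critical path norm despite both having $\lambda$-independent critical size. Concretely this reduces to product estimates of the type $\|\La_j(u_\lambda\otimes w)\|$ being summably small once the frequency supports of $u_\lambda$ and $w$ are separated, together with a bootstrap ensuring the iterates remain essentially localized at the scale of $F$ so that the separation persists along the iteration. Once this quantitative decoupling is in hand, the choice of $\lambda=\lambda(u_0,f)$ is dictated by requiring $\eta(\lambda)$ small relative to $M$ and $c_1$, and the global existence of $u_f$ follows.
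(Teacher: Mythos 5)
Your strategy (perturb around the rescaled unforced solution and gain smallness from separation of scales) is the same in spirit as the paper's, but two of your steps genuinely fail. First, the two-term ansatz $u_f=u_\lambda+w$ forces $w$ to carry the whole response to the force, and that response does not decay in time: $F(t)=\int_0^t e^{(t-s)\La}\mathbb{P}f\,ds=(-\La)^{-1}(\mathrm{Id}-e^{t\La})\mathbb{P}f$ tends to the nonzero steady profile $(-\La)^{-1}\mathbb{P}f$ as $t\to\infty$. Consequently $F$ does not belong to $\mathcal{L}^{a}(\RR_+,\dot{B}^{s_p+\frac{2}{a}}_{p,p})$ for any finite $a$ (a nonzero time-independent tail is not time-integrable), so your bound $\|F\|_{\mathbb{L}^{a:b}_{p}(0,\infty)}\lesssim\|\La^{-1}f\|_{L^3}$ fails for the finite-exponent component; worse, the fixed point you seek, $w=u_{f}-u_\lambda$, cannot lie in the space where you run the contraction, since $u_f$ converges to a nonzero steady state while $u_\lambda$ decays. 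Second, the key claimed estimate $\|B(u_\lambda,w)\|+\|B(w,u_\lambda)\|\le\eta(\lambda)\|w\|$ with $\eta(\lambda)\to0$ is false as an operator bound: the critical path norms are invariant under the scaling $\Lambda_\lambda$ and $B(u_\lambda,w)=\Lambda_\lambda B\bigl(u,\Lambda_\lambda^{-1}w\bigr)$, so the operator norm of $w\mapsto B(u_\lambda,w)$ on $\mathbb{L}^{a:b}_{p}(0,\infty)$ is independent of $\lambda$ (test it on $w$ concentrated at scale $\lambda$, e.g.\ $w=u_\lambda$ itself). Orthogonality of scales produces smallness only for products of two \emph{fixed} profiles at separated scales, not uniformly over a ball of $w$'s; the ``bootstrap keeping the iterates localized at scale $1$'' that you defer to is precisely the missing ingredient, and the product laws alone do not supply it.

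The paper repairs both problems with one device: a three-term decomposition $u_{f,\lambda}=u_\lambda+U_f+r_\lambda$, where $U_f:=NSf(0)$ is the global small solution of $(NSf)$ with zero initial data (it exists by Theorem \ref{well-posed-L3}, using only the smallness of $\|\La^{-1}f\|_{L^3}$, no scaling). The force and its non-decaying response are absorbed into $U_f$, so $r_\lambda$ solves a system with zero data, \emph{no} external force, drift $u_\lambda+U_f$, and source $-Q(u_\lambda,U_f)$: a product of two fixed functions living at scales $\lambda$ and $1$, to which Proposition \ref{orthogonal-product} applies directly, giving $\|Q(u_\lambda,U_f)\|_{\mathcal{L}^{p}(\RR_+,\dot{B}_{p,p}^{s_p+\frac{2}{p}-2})}\to0$ as $\lambda\to0$. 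Crucially, the large drift term is never required to be small as an operator: Proposition \ref{large-linear} handles an arbitrarily large drift $g$ by a Gronwall-type time-splitting, at the price of requiring the source to be below the threshold $\eps_0\exp\bigl(-C\|g\|\bigr)$; since $\|u_\lambda\|_{\mathcal{L}^{p}(\RR_+,\dot{B}_{p,p}^{s_p+\frac{2}{p}})}=\|u\|_{\mathcal{L}^{p}(\RR_+,\dot{B}_{p,p}^{s_p+\frac{2}{p}})}$ is independent of $\lambda$, this threshold is fixed, and the orthogonality limit lets one choose $\lambda$ to meet it. If you replace your ansatz by this one (subtracting $U_f$ as well), the rest of your outline goes through essentially as you wrote it.
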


\section{Proof of the two main results}
\subsection{The blow-up criterion}
Suppose that $\|\La^{-1} f\|_{L^3}<c$ is a fixed external force.

Let us define 
\beqq
\begin{split}
A_c:=\sup\{A>0|&\sup_{t\in[0,T^*(u_0,f))}\|NSf(u_0)(t)\|_{L^3}\leq A\\
&\Longrightarrow T^*(u_0,f)=\infty, \forall u_0\in L^3(\RR^3)\}.
\end{split}
\enqq 
Note that $A_c$ is well-defined by small-data results. 
If $A_c$ is finite, then $A_c$ can be rewritten as
\beq
A_c=\inf\{\sup_{t\in[0,T^*(u_0,f))}\|NSf(u_0)(t)\|_{L^3}|u_0\in L^3~\mathrm{with}~T^*(u_0,f)<\infty\}.
\enq 
In the case when $A_c<\infty$, we introduce the (possibly empty) set of initial data generating a critical element as follows:
\beq
\mathcal{D}_c:=\{u_0\in L^3(\RR^3)|T^*(u_0,f)<\infty,~\sup_{t\in[0,T^*(u_0,f))}\|NSf(u_0)(t)\|_{L^3}=A_c\}.
\enq
Before proving Theorem \ref{Blow-up}, we prove the above set is empty.
\begin{proposition}[$\mathcal{D}_c$ is empty]\label{critical-elements}
	Suppose that $A_c<\infty$, then  $\mathcal{D}_c=\emptyset$.
\end{proposition}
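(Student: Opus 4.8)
The plan is to argue by contradiction: suppose $\mathcal{D}_c \neq \emptyset$ and fix $u_0 \in \mathcal{D}_c$, so that $u_f := NSf(u_0)$ blows up at $T^* := T^*(u_0,f) < \infty$ while realizing the minimal critical size $\sup_{t\in[0,T^*)}\|u_f(t)\|_{L^3} = A_c$. First I would pick a sequence $t_n \nearrow T^*$ and study the solution as it approaches the singular time. Because the lifespan of the solution issued from $u_f(t_n)$ shrinks to $0$, the small-data part of Theorem \ref{well-posed-L3} prevents $\|u_f(t_n)\|_{L^3}$ from being small (otherwise that solution, hence $u_f$ itself, would be global); thus the bounded sequence $\{u_f(t_n)\} \subset L^3$ must concentrate, which is precisely the regime to which the profile decomposition for $(NSf)$ built in Section 4 applies.

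Next I would feed this sequence into that decomposition, writing $u_f$ near blow-up as the superposition of (i) rescaled profiles $V^j$ which, at their concentrating scales $\lambda_n^j \to 0$ and cores $x_n^j$, solve the force-free system $(NS)$; (ii) a single unit-scale piece that still evolves under $(NSf)$; and (iii) a remainder controlled by the perturbation estimate of Section 5. The central mechanism is that, although the critical $L^3$ norm of $\La^{-1}f$ is scale invariant, the rescaled force $\lambda_n^j\,(\La^{-1}f)(\lambda_n^j\,\cdot + x_n^j)$ — equivalently the rescaled steady state $U_f$ — tends \emph{weakly} to $0$ as $\lambda_n^j \to 0$, which is exactly what turns the concentrating profiles into genuine solutions of the unforced equation. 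I would then combine the asymptotic orthogonality of the $L^3$ norms of the profiles with the minimality of $A_c$: if two or more profiles were nontrivial, each would carry strictly less than $A_c$ of the critical norm, so by definition of $A_c$ each piece would generate a global solution, and the stability/perturbation result would propagate this to global existence of $u_f$, contradicting $T^* < \infty$. Hence the decomposition must collapse to a single force-free profile $V$, which itself blows up in finite time and satisfies $\sup_t \|V(t)\|_{L^3} \leq A_c < \infty$.

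Finally, this $V$ is an $L^3$ solution of the unforced Navier--Stokes system $(NS)$ that blows up in finite time with uniformly bounded $L^3$ norm, contradicting the $L^3$ blow-up criterion of Escauriaza, Seregin and Šverák \cite{ESS12} (equivalently its critical-element reformulation \cite{gkp12,gkp2}), by which such a solution is global. Therefore no $u_0 \in \mathcal{D}_c$ can exist and $\mathcal{D}_c = \emptyset$; it is exactly this reduction to the force-free profiles that lets us dispense with backward uniqueness of the heat equation, which is unavailable once a time-independent force is present. I expect the main obstacle to be steps (ii)--(iii): since $f$ breaks the scaling invariance, one must show simultaneously that the force is asymptotically invisible to the concentrating profiles yet faithfully retained by the unit-scale piece, and that the perturbation theory of Section 5 closes even though the force is not small in a scale-invariant norm uniformly along the rescalings. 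Controlling the cross terms between the small steady state $U_f$ and the concentrating profiles, and checking that they do not obstruct the transfer of (non-)blow-up between $u_f$ and its profiles, is the crux of the argument.
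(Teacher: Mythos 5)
Your skeleton (argue by contradiction, translate in time along $t_n \nearrow T^*$, apply the profile decomposition, use orthogonality of $L^3$ norms, and conclude via the Escauriaza--Seregin--\v{S}ver\'ak criterion applied to a force-free profile) matches the paper's proof, but the one step that makes the paper's argument work is missing, and the step you substitute for it does not close. First, your ``collapse'' argument --- if two or more profiles are nontrivial then each carries strictly less than $A_c$ of the critical norm, hence is global ``by definition of $A_c$'' --- conflates two different quantities: the orthogonality inequality of Theorem \ref{profile-L3} controls the $L^3$ norms of the initial profiles $\phi_j$, whereas the definition of $A_c$ requires a bound on $\sup_{t}\|NSf(\cdot)(t)\|_{L^3}$ of the \emph{evolved} solution; moreover $A_c$ is defined only for solutions of $(NSf)$, so it says nothing about the force-free profiles $NS(\phi_j)$ (for those one must invoke \cite{ESS12}, not minimality). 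Second, and fatally: even granting a collapse to a single nontrivial profile, nothing in your argument prevents that profile from being the unit-scale \emph{forced} one, $NSf(\phi_1)$, where $\phi_1$ is the weak limit of the sequence. In that scenario your final contradiction evaporates: ESS does not apply to the forced equation (this is precisely why backward uniqueness is unavailable here), and a single forced profile blowing up with $\sup_t\|\cdot\|_{L^3} \leq A_c$ is just another critical element, so the argument is circular.

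The paper excludes exactly this scenario by exploiting the fact that its sequence consists of time-translates of one blowing-up solution: $u_{0,n}:=u_c(s_n)$ gives $T^*(u_{0,n},f) = T^*(u_{0,c},f) - s_n \to 0$, while Theorem \ref{profile-NSf} guarantees that the lifespan of $u_n$ is bounded below by $\lambda_{1,n}^2 T^1$, where (after reordering) the first profile is the one whose rescaled lifespan is smallest. Since $T^1>0$ is fixed, this forces $\lambda_{1,n}\to 0$, so the lifespan-controlling profile cannot be the unit-scale one (in the paper's notation, $j_0>1$); it is therefore a solution $U^1=NS(\phi_1)$ of the \emph{unforced} equation with $T^*_1<\infty$. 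Only then do the blow-up criterion of \cite{gkp12} (giving $\limsup_{t \to T^*_1}\|U^1(t)\|_{L^3} = \infty$) and Proposition \ref{orth-L3} (which, evaluated at the rescaled times $t_n = \lambda_{1,n}^2 s$, bounds $\|U^1(s)\|_{L^3}^3$ by $A_c^3$ up to an error $\eps(n,s)\to0$) collide to give the contradiction. Note also that no collapse to a single profile is needed or proved in the paper: the decomposition may contain many nontrivial profiles, and what matters is only that the one controlling the lifespan concentrates, hence is force-free and blows up. You should replace your minimality/collapse step with this lifespan-scaling argument.
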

\begin{proof}
    We prove the proposition by contradiction.
	Assume $\mathcal{D}_c\neq\emptyset$, we take a $u_{0,c}\in\mathcal{D}_c$ and denote $u_c=NSf(u_{0,c})$. By the definition of $\mathcal{D}_c$, we have $T^*(u_{0,c},f)<\infty$ and 
	\beq
	\sup_{t\in[0,T^*(u_{0,c},f))}\|NSf(u_{0,c})(t)\|_{L^3}=A_c.
	\enq 
	We choose a sequence $(s_n)_{n\in\mathbb{N}}\subset[0,T^*(u_{0,c},f))$ such that $s_n\nearrow T^*(u_{0,c},f)$. Let $u_{0,n}:=u_c(s_n)$ and $u_n:=NSf(u_{0,n})$. 
	Since $A_c<\infty$, we know that $(u_{0,n})_{n\in\mathbb{N}}$ is a bounded sequence in $L^3(\RR^3)$ and 
	 \beq
	\sup_{t\in[0,T^*(u_{0,n},f))}\|u_n(t)\|_{L^3}=A_c.
	\enq 
	By Theorem \ref{profile-NSf} with the same notation, for any $t\leq\tau_n$, $u_n$ has the following profile decomposition , for any $J\geq J_0$ and $n\geq n(J_0)$,
	\beq
	u_n=U^1+\sum_{j=2}^J\Lambda_{j,n}U^j+w^J_n+r^J_n,
	\enq 
	where $\tau_n=\min_{j\in I}\{\lambda_{j,n}^2T^j\}$. After reordering, we can write 
	\beq
	u_n=\sum_{j=1}^J\Lambda_{j,n}U^j+w^J_n+r^J_n
	\enq 
	with $\Lambda_{j_0,n}\equiv\mathrm{Id}$ for some $1\leq j_0\leq J_0$ and for $j\leq J$ and $n$ large enough,
	\beq
	\forall j\leq k\leq J_0,~~\lambda_{j,n}^2T^*_j\leq \lambda_{k,n}^2 T^*_k.
	\enq 
	First we claim that $j_0>1$. In fact,
	by Theorem \ref{profile-NSf},
	\beq
	\lambda_{1,n}T^*_1\leq T^*(u_{0,n},f)=T^*(u_{0,c},f)-s_n\to0,~~\mathrm{as}~~n\to\infty,
	\enq
	which implies that 
	\beq
	\lim_{n\to\infty}\lambda_{1,n}=0.
	\enq 
	Hence $j_0>1$, which implies that with the new ordering $U^1=NS(\phi_1)$, and $T_1^*<\infty$.\\
	
	Now we take $s\in(0,T^*_1)$ and let $t_n=\lambda_{1,n}^2s$. According to Proposition \ref{orth-L3}, we have 
	\beq
	A_c^3\geq \|u_n(t_n)\|^3_{L^3}
	\geq \|U^1(s)\|_{L^3}^3+\eps(n,s),
	\enq 
	where $\lim_{n\to\infty}\eps(n,s)=0$ for any fixed $s$. By the blow-up criterion for the Navier-Stokes equation (see \cite{gkp12})
	\beq
	\limsup_{t\to T^*_1}\|U^1(t)\|_{L^3(\RR^3)}=\infty,
	\enq 
	then we choose a $s_0\in(0,T^*_1)$ such that 
	\beq
	\|U^1(s_0)\|_{L^3(\RR^3)}>2A_c.
	\enq 
	And we can take a corresponding $n_0:=n(s_0)$ such that $|\eps(n_0,s_0)|\leq A_c^3$. Then we get
	\beq
	A^3_c>8A^3_c-A^3_c=7A^3_c
	\enq 
	which contradicts the fact that $A_c<\infty$. Then $\mathcal{D}_c=\emptyset$.  
	\end{proof}
Now we prove Theorem \ref{Blow-up} by contradiction.
\begin{proof}[Proof of Theorem \ref{Blow-up}]
    We suppose that $A_c<\infty$ which means \reff{blow-up} fails.\\
    Let us consider a sequence $u_{0,n}$ bounded in the space $L^3$ such that the life span of $NSf(u_{0,n})$ satisfies $T^*(u_{0,n},f)<\infty$ for each $n\in\mathbb{N}$ and such that $$A_n:=\sup_{t\in[0,T^*(u_{0,n},f))}\|NSf(u_{0,n})\|_{L^3(\RR^3)}$$ satisfies
	\beq
	A_c\leq A_n ~~\mathrm{and}~~A_n\to A_c,~~n\to\infty.
	\enq 
	Then by Theorem \ref{profile-NSf} and after reordering as above, we have for any $J\geq J_0$ and $n\geq n(J_0)$
	\beq
	u_n:=NSf(u_{0,n},f)=\sum_{j=1}^{J}\Lambda_{j,n}U^j+w^J_n+r^J_n,\forall t\in[0,\tau_n]
	\enq 
	and for any $n\geq n_0(J_0)$, recalling that $T^*_j$ is the life span of $U^j$
	\beq
	\forall j\leq k\leq J_0,~~\lambda_{j,n}^2T^*_j\leq \lambda_{k,n}^2 T^*_k,
	\enq 
	where $U^{j_0}=NSf(\phi_{j_0})$ ($j_0$ is such that $\Lambda_{j_0,n}\equiv1$) and $U^j=NS(\phi_j)$ for any $1\leq j\leq J_0$ with $j\neq j_0$. Theorem \ref{profile-NSf} also ensures that there $J_0$ such that $T^*_{J_0}<\infty$ (if not we would have $\tau_n\equiv\infty$ and hence $T^*(u_{0,n},f)\equiv\infty$, contrary to our assumption). 
	On the other hand, we recall that $U^{j_0}:=NSf(\phi_{j_0})$ with $1\leq j_0\leq J_0 $, where $\phi_{j_0}$ is a weak limit of $(u_{0,n})_{n\geq 1}$. Therefore by the above re-ordering,  two different cases need to be considered:
	\begin{itemize}
     \item $j_0=1$: the lower-bound of the life span of $u_n$ is controlled by the life span of $U^{j_0}=U^1=NSf(\phi_{j_0})$, which generates a critical element. 
     \item $j_0>1$: the lower-bound of the life span of $u_n$ is controlled by the life span of $\Lambda_{1,n}NS(\phi_1)$.
     \end{itemize}

	\textbf{Case 1: $j_0=1$}. In this case, by definition of $A_c$, we have $U^1=NSf(\phi_1)$, $\Lambda_{1,n}\equiv\mathrm{Id}$ and 
	\beqq\label{An}
	\sup_{s\in[0,T^*_1)}\|NSf(\phi_1)\|_{L^3}\geq A_c.
	\enqq  
	For any $s\in (0,T^*_1)$, setting $t_n:=\lambda_{1,n}^2s$, by Proposition \ref{orth-L3}
	\beq
	A_n^3&\geq& \sup_{t\in[0,T^*(u_{0,n},f))}\|NSf(u_{0,n})\|^3_{L^3}\geq \|NSf(u_{0,n})(t_n)\|^3_{L^3}\\
	&\geq& \|U^1(s)\|^3_{L^3}+\eps(n,s),
	\enq 
	where for any fixed $s\in[0,T^*_1)$
	\beq
	\lim_{n\to\infty}\eps(n,s)=0 .
	\enq 
	According to \reff{An} and the fact that $A_n\to A_c$ as $n\to\infty$, we infer that
	\beq
	\sup_{s\in[0,T^*_1)}\|NSf(\phi_1)\|_{L^3}=A_c,
	\enq 
	which means $\phi_1\in\mathcal{D}_c$. This fact contradicts Proposition \ref{critical-elements}. \\
	\textbf{Case 2 $j_0>1$:}. In this case, $U^1=NS(\phi_1)$ and $U^1$ satisfies that
	\beqq\label{t-T1}
	\limsup_{t\to T^*_1}\|U^1(t)\|_{L^3}=\infty,
	\enqq 
	and $\Lambda_{1,n}\neq\mathrm{Id}$.
	
    On the other hand for any $s\in (0,T^*_1)$, setting $t_n:=\lambda_{1,n}^2s$,
    \beq
	A_n^3&\geq& \sup_{t\in[0,T^*(u_{0,n},f))}\|NSf(u_{0,n})\|^3_{L^3}\geq \|NSf(u_{0,n})(t_n)\|^2_{\dot{H}^{1/2}}\\
	&\geq& \|U^1(s)\|^3_{L^3}+\eps(n,s),
	\enq 
    where 
	\beq
	\lim_{n\to\infty}\eps(n,s)=0,~~\forall s\in[0,T^*_1).
	\enq 
	Thanks to \reff{t-T1}, one can 
	take $s_0$ such that 
	\beq
	\|U^1(s_0)\|_{L^3}>2A_c
	\enq 
	and choose $n_0:=n(s_0)$ such that $\eps(n_0,s_0)\leq A^3_c$ and $A^3_{n_0}\leq 2A^3_c$, then we have 
	\beq
	2A^3_c&\geq& \|U^1(s_0)\|^3_{L^3}+\eps(n_0,s_0)\\
	&>&7A^3_c,
	\enq 
	which contradicts the fact that $A_c<\infty$. Then we prove that for any $u_0$, if $T^*(u_{0},f)<\infty$
	\beq
	\limsup_{t\to T^*(u_0,f)}\|NSf(u_0)\|_{L^3(\RR^3)}=\infty.
	\enq 
	Theorem \ref{Blow-up} is proved.

\end{proof}

\subsection{The global solutions to $(NSf)$}
In this part we focus on the existence of global solutions to $(NSf)$. In this paragraph we  assume that the given external force $f$ satisfies $\|\La^{-1}f\|_{L^3}<c_1$, where $c_1$ is the small constant given in Proposition \ref{large-linear}.

\begin{proof}[The proof of Theorem \ref{examples}]
Suppose that $f$ is the given external force and $\|\La^{-1}f\|_{L^3}<c_1$. According to Theorem \ref{well-posed-L3}, there exists a unique solution $U_f:=NSf(0)\in C(\RR_+,L^3)$ to $(NSf)$ with initial data $0$. Let $u_0\in L^3$ described in the theorem. Then its corresponding solution $u:=NS(u_0)$ to $(NS)$ belonging to $L^{\infty}(\RR_+,L^3(\RR^3))\cap \mathbb{L}^{1:\infty}_{p}(\infty)$, for any $3<p<\infty$.

We denote $u_\lambda=\Lambda_{\lambda}u$, where $\Lambda_{\lambda}u(t,x):=\lambda^{-1}u(\lambda^{-2}t,\lambda^{-1}x)$. It is easy to check that $\Lambda_\lambda u$ is a solution to $(NS)$ with the initial data $\lambda^{-1}u_0(\lambda\cdot)$ denoted by $u_{0,\lambda}$.

By Theorem \ref{well-posed-L3}, for any fixed $\lambda>0$ there exist a unique $T^*(u_{0,\lambda},f)>0$ and a unique solution $u_{f,\lambda}:=NSf(u_{0,\lambda})$ to $(NSf)$ such that $u_{f,\lambda}\in C([0,T^*(u_{0,\lambda},f)),L^3(\RR^3))$. We define $r_\lambda=u_{f,\lambda}-u_{\lambda}-U_f$, which is a solution to the following perturbation equation
\beq
\left\{
  \begin{array}{ll}
   \p_t r_\lambda-\La r_\lambda+\frac{1}{2}Q(r_\lambda,r_\lambda)+Q(r_\lambda,u_\lambda+U_f)=-Q(u_{\lambda},U_f),\\
   \nabla\cdot r_\lambda=0,\\
   r_\lambda|_{t=0}=0,
  \end{array}
\right.
\enq
where $Q(a,b)=\mathbb{P}(a\cdot\nabla b+ b\cdot\nabla a)$. Hence to prove that there exists a $\lambda_0>0$ such that $u_f\in C(\RR_+,L^3(\RR^3))$ it is enough to prove there exists a $\lambda_0$ such that  $r_{\lambda_0}\in\mathbb{L}^{p:\infty}_p(\infty)$ for some $3<p<5$.\\
In fact if $r_{\lambda_0}\in\mathbb{L}^{p:\infty}_p(\infty)$, we notice that $r_{\lambda_0}$ has the following integral form
\beq
r_{\lambda_0}(t)=B(r_{\lambda_0},_{\lambda_0}r)+2B(r_{\lambda_0},u_{\lambda_0}+U_f)+2B(u_{\lambda_0},U_f),
\enq
where $B$ is defined in \reff{Duhamel}. By the smooth effect of heat kernel and the product law of Besov space introduced in Proposition \ref{product-law}, we have that
\beq
\|B(r_{\lambda_0},r_{\lambda_0})\|_{\mathbb{L}^{\frac{p}{2}:\infty}_{\frac{p}{2}}(\infty)}+2\|B(r_{\lambda_0},u_{\lambda_0})\|_{\mathbb{L}^{\frac{p}{2}:\infty}_{\frac{p}{2}}(\infty)}\lesssim\|r_{\lambda_0}\|_{\mathbb{L}^{p:\infty}_{p}(\infty)}(\|r_{\lambda_0}\|_{\mathbb{L}^{p:\infty}_{p}(\infty)}+\|u_{\lambda_0}\|_{\mathbb{L}^{p:\infty}_{p}(\infty)}),
\enq
and there exists some $2<q<3$ such that
\beq
\|B(r_{\lambda_0},U_f)\|_{\mathbb{L}^{p:\infty}_{q}(\infty)}+\|B(u_{\lambda_0},U_f)\|_{\mathbb{L}^{p:\infty}_{q}(\infty)}\lesssim(\|r_{\lambda_0}\|_{\mathbb{L}^{p:\infty}_{p}(\infty)}+\|u_{\lambda_0}\|_{\mathbb{L}^{p:\infty}_{p}(\infty)})\|U_f\|_{L^3}.
\enq
According to the fact that for any $1\leq \bar{p}<3$, $\mathcal{L}^{\infty}(\RR_+,\dot{B}^{s_{\bar{p}}}_{\bar{p}}(\RR^3))\hookrightarrow L^{\infty}(\RR_+, L^{3}(\RR^3))$ and the smooth effect of heat kernel, we prove $r\in C(\RR_+,L^3(\RR^3))$.

Now we turn to prove that there exists a $\lambda_0>0$ such that $r_{\lambda_0}\in\mathbb{L}^{p:\infty}_p(\infty)$ with $3<p<5$.\\
According to Proposition \ref{orthogonal-product}, we have 
\beq
\lim_{\lambda\to0}\|Q(u_\lambda,U_f)\|_{\mathcal{L}^{p}(\RR_+,\dot{B}_{p,p}^{s_p+\frac{2}{p}-2})}\leq \lim_{\lambda\to0}\|u_\lambda\otimes U_f\|_{\mathcal{L}^{p}(\RR_+,\dot{B}_{p,p}^{s_p+\frac{2}{p}-1})}=0.
\enq
Hence there exists a $\lambda_0>0$ such that 
\beq
\|Q(u_{\lambda_0},U_{f})\|_{\mathcal{L}^{p}(\RR_+,\dot{B}_{p,p}^{s_p+\frac{p}{2}-2})}\leq \eps_0\mathrm{exp}\big(-C\|u_{\lambda_0}\|_{\mathcal{L}^{p}(\RR_+,\dot{B}_{p,p}^{s_p+\frac{2}{p}})}\big)
\enq 
provided that $\|u_{\lambda}\|_{\mathcal{L}^{p}(\RR_+,\dot{B}_{p,p}^{s_p+\frac{2}{p}})}=\|u\|_{\mathcal{L}^{p}(\RR_+,\dot{B}_{p,p}^{s_p+\frac{2}{p}})}$ is independent of $\lambda$. Here $\varepsilon_0$ and $C$ are constant in Proposition $\ref{large-linear}$. \\
Applying Proposition \ref{large-linear} we prove $r_{\lambda_0}\in\mathbb{L}^{p:\infty}_p(\infty)$ with $3<p<5$. Then we prove the theorem.

\end{proof}

\section{Profile decomposition}
In \cite{gkp12} a profile decomposition of solutions to the Navier-Stokes equations associated with data in $\dot{B}^{s_p}_{p,p}$ is proved for $d<p<2d+3$, thus extending the result of \cite{KK2}. In this section we use the idea of \cite{gkp12} to give a decomposition of solutions to the Navier-Stokes equations with a small external force and associated with initial data in $L^3$.

\subsection{Profile decomposition of bounded sequence in $L^3$}
Before stating the main result of this section, let us recall the following definition.
\begin{definition}\label{profile}
	We say that two sequences $(\lambda_{j,n},x_{j,n})_{n\in\mathbb{N}}\in ((0,\infty)\times\RR^3)^{\mathbb{N}}$ for $j\in\{1,2\}$ are orthogonal, and we write $(\lambda_{1,n},x_{1,n})_{n\in\mathbb{N}}\perp(\lambda_{2,n},x_{2,n})_{n\in\mathbb{N}}$, if
	\beqq\label{ortho-lambda}
	\lim_{n\to+\infty}\frac{\lambda_{1,n}}{\lambda_{2,n}}+\frac{\lambda_{2,n}}{\lambda_{1,n}}+\frac{|x_{1,n}-x_{2,n}|}{\lambda_{1,n}}=+\infty.
	\enqq 
	Similarly we say that a set of $(\lambda_{j,n},x_{j,n})_{n\in\mathbb{N}}$, for $j\in\mathbb{N},~j\geq1$, is orthogonal if for all $j\neq j'$, $(\lambda_{j,n},x_{j,n})_{n\in\mathbb{N}}\perp(\lambda_{j',n},x_{j',n})_{n\in\mathbb{N}}$.
\end{definition}
Next let us define, for any set of sequences $(\lambda_{j,n},x_{j,n})_{n\in\mathbb{N}}$ (for $j\geq1$), the scaling operator 
\beqq
\Lambda_{j,n}U_j(t,x):=\frac{1}{\lambda_{j,n}}U_j(\frac{t}{\lambda^2_{j,n}},\frac{x-x_{j,n}}{\lambda_{j,n}}).
\enqq
 
It is proved in \cite{K12} that any bounded (time-independent) sequence in $\dot{B}^{s_p}_{p,p}(\RR^3)$ may be decomposed into a sum of rescaled functions $\Lambda_{j,n}\phi_j$, where the set of sequences $(\lambda_{j,n},x_{j,n})_{n\in\mathbb{N}}$ is orthogonal, up to a small remainder term in $\dot{B}^{s_q}_{q,q}$, for any $q>p$. Since in this paper we only consider the initial data in $L^3$, we only state the  profile decomposition result of bounded sequences in $L^3$ in \cite{K12}. The precise statement is in the spirit of the pioneering work \cite{PG2}.
\begin{theorem}\label{profile-L3}
	Let $(\varphi_n)_{n\geq 1}$ be a bounded sequence of functions in $L^3(\RR^3)$ and let $\phi_1$ be any weak limit point of $(\varphi_n)_{n\in\mathbb{N}}$. Then, after possibly replacing $(\varphi_n)_{n\in\mathbb{N}}$ by a subsequence which we relabel $(\varphi_n)_{n\geq1}$, there exists a subsequence of profiles $(\phi_j)_{j\geq2}$ in $L^3(\RR^3)$, and a set of sequences $(\lambda_{j,n},x_{j,n})_{n\in\mathbb{N}}$	for $j\in\mathbb{N}$ with $(\lambda_{1,n},x_{1,n})\equiv(1,0)$ which are orthogonal in the sense of Definition \ref{profile} such that, for all $n, J\in\mathbb{N}$, if we define $\psi^J_n$ by
	\beq
	\varphi_n=\sum_{j=1}^J\Lambda_{j,n}\phi_j+\psi^J_n,
	\enq
	the following properties hold:
	\begin{itemize}
		\item the function $\psi^J_n$ is a remainder in the sense that for any $p>3$,
	          \beqq\label{remainder}
	          \lim_{J\to\infty}\big(\limsup_{n\to\infty}\|\psi^J_n\|_{\dot{B}^{s_p}_{p,p}}\big)=0;
	          \enqq 
	    \item There is a norm $\|\cdot\tilde{\|}_{L^3}$ which is equivalent to $\|\cdot\|_{L^3}$ such for each $n\in\mathbb{N}$,
	          \beq
	          \sum_{j=1}^{\infty}\|\phi_j\tilde{\|}^3_{L^3(\RR^3)}\leq \liminf_{n\to\infty}\|\varphi_n\tilde{\|}^3_{L^3(\RR^3)}
	          \enq 
	          and, for any interger $J$,
	          \beq
	          \|\psi^J_n\tilde{\|}_{L^3}\leq \|\varphi_n\tilde{\|}_{L^3}+o(1)
	          \enq
	          as $n$ goes to infinity.
	\end{itemize}
\end{theorem}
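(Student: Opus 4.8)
The plan is to build the profiles by an iterative concentration-compactness extraction in the spirit of Gérard's description of the defect of compactness of critical embeddings, adapted to the scaling-critical Lebesgue space $L^3(\RR^3)$ (this is essentially the content imported from \cite{K12}). The first profile is forced: I take $\phi_1$ to be the prescribed weak limit point with trivial scaling $(\lambda_{1,n},x_{1,n})\equiv(1,0)$, so that $\psi^1_n:=\varphi_n-\phi_1\rightharpoonup0$ in $L^3$. Then I iterate. Assuming orthogonal profiles $\phi_1,\dots,\phi_J$ have been extracted, set $\psi^J_n:=\varphi_n-\sum_{j=1}^J\Lambda_{j,n}\phi_j$ and arrange inductively (passing to subsequences) that $\psi^J_n$ is \emph{pure}, i.e. $\Lambda_{j,n}^{-1}\psi^J_n\rightharpoonup0$ in $L^3$ for every $j\le J$. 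The defect to be killed is $\delta_J:=\limsup_{n\to\infty}\|\psi^J_n\|_{\dot{B}^{s_p}_{p,p}}$ for a fixed $p>3$; if $\delta_J=0$ for some $J$ the decomposition closes, and otherwise a new profile is produced.

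The heart of the argument is a quantitative extraction lemma: if $(g_n)$ is bounded in $L^3$ by $M$ and $\limsup_n\|g_n\|_{\dot{B}^{s_p}_{p,p}}\ge\delta>0$, then there are scales/cores $(\lambda_n,x_n)$ such that, along a subsequence, the norm-preserving rescalings $\lambda_n g_n(\lambda_n\cdot+x_n)$ converge weakly in $L^3$ to some $\phi\ne0$ with $\|\phi\|_{L^3}\gtrsim_M\delta^{p/(p-3)}$. I would obtain this from two ingredients. First, the interpolation inequality
\[
\|g\|_{\dot{B}^{s_p}_{p,p}}\lesssim\|g\|_{L^3}^{3/p}\,\|g\|_{\dot{B}^{-1}_{\infty,\infty}}^{1-3/p},
\]
valid because $\dot{B}^{s_p}_{p,p}$ interpolates between $L^3$ and $\dot{B}^{-1}_{\infty,\infty}$, which converts the Besov lower bound into $\limsup_n\|g_n\|_{\dot{B}^{-1}_{\infty,\infty}}\gtrsim_M\delta^{p/(p-3)}$. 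Second, the heat characterization $\|g\|_{\dot{B}^{-1}_{\infty,\infty}}\simeq\sup_{t>0}t^{1/2}\|e^{t\Delta}g\|_{L^\infty}$ then furnishes, for $n$ large, times $t_n$ and points $z_n$ with $t_n^{1/2}|e^{t_n\Delta}g_n(z_n)|\gtrsim_M\delta^{p/(p-3)}$. Setting $(\lambda_n,x_n)=(t_n^{1/2},z_n)$ and passing to a weak limit $\phi$ of the rescalings, the commutation of the heat flow with scaling shows $|e^{\Delta}\phi(0)|\gtrsim_M\delta^{p/(p-3)}$ (testing against a Gaussian, a functional bounded on $L^3$), which gives both $\phi\ne0$ and the claimed lower bound on $\|\phi\|_{L^3}$.

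Applying this to $g_n=\psi^J_n$ yields $(\lambda_{J+1,n},x_{J+1,n})$ and $\phi_{J+1}\ne0$, and I set $\psi^{J+1}_n:=\psi^J_n-\Lambda_{J+1,n}\phi_{J+1}$. Orthogonality of the new scale to the earlier ones follows from purity: if $(\lambda_{J+1,n},x_{J+1,n})$ failed to be orthogonal to some $(\lambda_{j,n},x_{j,n})$ with $j\le J$, a change of variables would identify the extracting rescaling of $\psi^J_n$ with a fixed transform of $\Lambda_{j,n}^{-1}\psi^J_n$, whose weak limit is $0$, contradicting $\phi_{J+1}\ne0$; one checks likewise that $\psi^{J+1}_n$ stays pure. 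To see that the defect is exhausted I would invoke the equivalent norm $\|\,\cdot\,\tilde{\|}_{L^3}$ of \cite{K12}, for which orthogonal profiles satisfy the almost-Pythagorean relation $\sum_{j=1}^J\|\phi_j\tilde{\|}^3_{L^3}+\|\psi^J_n\tilde{\|}^3_{L^3}\le\|\varphi_n\tilde{\|}^3_{L^3}+o(1)$ as $n\to\infty$; combined with the extraction bound $\|\phi_{J+1}\tilde{\|}^3_{L^3}\gtrsim\delta_J^{3p/(p-3)}$ and the uniform bound on $\|\varphi_n\tilde{\|}_{L^3}$, convergence of the series $\sum_j\|\phi_j\tilde{\|}^3_{L^3}$ forces $\delta_J\to0$, which is exactly the remainder estimate for that $p$. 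The same for every $p>3$ is then recovered by interpolating the remainder between its uniform $L^3$ bound and the embedding $\dot{B}^{s_p}_{p,p}\hookrightarrow\dot{B}^{-1}_{\infty,\infty}$, and the two displayed inequalities of the theorem are read directly off the almost-orthogonality relation. A standard diagonal argument over $J$ fixes a single subsequence for which all weak limits, purity conditions, and orthogonality relations hold at once.

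The main obstacle is precisely the non-Hilbertian nature of $L^3$: in the $\dot{H}^{1/2}$ setting orthogonality of scales gives a genuine Pythagorean identity for free, whereas here the bare $L^3$ norm obeys only a Brezis--Lieb-type splitting along almost-everywhere convergent subsequences. Turning this into the summable, scale-orthogonal almost-Pythagorean inequality above is the technical core, and it is exactly what the equivalent norm $\|\,\cdot\,\tilde{\|}_{L^3}$ of \cite{K12} is designed to supply; the delicate part of a self-contained proof would be constructing that norm and verifying that the extracted profiles and remainders (in particular the a.e.-convergence underlying the splitting) are compatible with it uniformly in $J$.
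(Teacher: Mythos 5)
The paper does not actually prove this statement: Theorem \ref{profile-L3} is quoted, without argument, from Koch \cite{K12} (in the spirit of G\'erard \cite{PG2}), so your sketch can only be compared against the cited literature rather than against anything in the text. Judged on that basis, your outline is correct and is essentially the G\'erard--Koch scheme: the iterative extraction with pure remainders, the quantitative extraction lemma via the interpolation inequality $\|g\|_{\dot{B}^{s_p}_{p,p}}\lesssim\|g\|_{L^3}^{3/p}\|g\|_{\dot{B}^{-1}_{\infty,\infty}}^{1-3/p}$ together with the heat-kernel characterization of $\dot{B}^{-1}_{\infty,\infty}$, the weak-continuity argument showing the extracted profile is nontrivial and the scales orthogonal, the exhaustion of the defect through summability of $\sum_j\|\phi_j\|_{L^3}^3$, and the upgrade from one $p>3$ to all $p>3$ by interpolation against the uniform $L^3$ bound are all sound steps. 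The one non-routine ingredient you do not prove --- the existence of an equivalent norm on $L^3$ obeying the almost-Pythagorean expansion along scale-orthogonal profiles, which is what replaces the Hilbertian orthogonality available in $\dot{H}^{1/2}$ --- is precisely the technical core of \cite{K12}, and you correctly identify it as such; since the paper outsources the entire theorem to that same reference, this reliance is not a gap relative to what the paper itself provides.
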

We mention that, in particular, for any $j\geq 2$, either $\lim_{n\to\infty}|x_{j,n}|=\infty$ or $\lim_{n\to\infty}\lambda_{j,n}\in\{0,\infty\}$ due to the orthogonality of scales/cores with $(\lambda_{1,n},x_{1,n})\equiv(1,0)$, and also that 
\beqq\label{sum-L3}
\sum_{j=1}^{\infty}\|\phi_j\|_{L^3}^3\lesssim\liminf_{n\to\infty}\|\varphi_n\|_{L^3}^3.
\enqq  
\subsection{Profile decomposition of solutions to $(NSf)$}

\begin{theorem}\label{profile-NSf}
  Suppose that $\|\La^{-1}f\|_{L^3}<c$, where $c$ is the small universal constant in Theorem \ref{well-posed-L3}.
  
  Let $(u_{0,n})_{n\in\mathbb{N}}$ be a bounded sequence of divergence-free vector fields in $L^3(\RR^3)$, and $\phi_1$ be any weak limit point of $\{u_{0,n}\}$. Then, after possibly relabeling the sequence due to the extraction of a subsequence following an application of Theorem \ref{profile-L3} with $\varphi_n:=u_{0,n}$, defining $u_n:=NSf(u_{0,n})$, $U^1:=NSf(\phi_1)\in C([0,T_1],L^3)$ and $U^j:=NS(\phi_j)\in C([0,T_j],L^3)$ for any $j\geq2$ (where $T_j$ is any real number smaller than $T^*_j$, where $T^*_j$ is the life span of $U^j$ for $j\geq1$, and $T^j=\infty$ if $T^*_j=\infty$), the following properties hold:
  \begin{itemize}
  	\item there is a finite (possibly empty) subset $I$ of $\mathbb{N}$ such that 
          \beq
          \forall j\in I,~~T^j<\infty~~\mathrm{and}~~ \forall j'\in\mathbb{N}\backslash I,~~U^{j'}\in  C(\RR_+,L^3(\RR^3)).
          \enq 
          Moreover setting $\tau_n:=\min_{j\in I}\lambda_{j,n}^2T^j$ if $I$ is nonempty and $\tau_n=\infty$ otherwise, we have
          \beqq\label{per-bounded}
          \sup_n\|u_n\|_{\mathcal{L}^{\infty}_{\tau_n}(L^3(\RR^3))}<\infty;
          \enqq
  	\item there exists some large $J_0\in\mathbb{N}$ such that for each $J>J_0$, there exists $N(J)\in\mathbb{N}$ such that for all $n>N(J)$, all $t\leq\tau_n$ and all $x\in\RR^3$, setting $w^J_n:=e^{t\La}(\psi^J_n)$ and defining $r^J_n$ by 
          \beqq
          u_n(t,x)=U^1+\sum_{j=2}^J\Lambda_{j,n}U^j+w^J_n+r^J_n,
          \enqq 
          then $w^J_n$ and $r^J_n$ are small remainders in the sense that, for any $3<p<5$,
          \beqq\label{wnrn}
          \lim_{J\to\infty}\big(\limsup_{n\to\infty}\|w^J_n\|_{\mathbb{L}_p^{1:\infty}(\infty)}\big)=\lim_{J\to\infty}\big(\limsup_{n\to\infty}\|r^J_n\|_{\mathbb{L}^{p:\infty}_p(\tau_n)}\big)=0.
          \enqq 
  \end{itemize}
\end{theorem}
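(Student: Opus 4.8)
The plan is to transport the initial-data profile decomposition of Theorem \ref{profile-L3} through the forced flow and to control the resulting error by the large-data perturbation result of Section 5. Writing the initial decomposition $u_{0,n}=\sum_{j=1}^{J}\Lambda_{j,n}\phi_j+\psi^J_n$ with $(\lambda_{1,n},x_{1,n})\equiv(1,0)$, I would take as approximate solution
\beq
v^J_n:=U^1+\sum_{j=2}^{J}\Lambda_{j,n}U^j+w^J_n,\qquad w^J_n:=e^{t\La}\psi^J_n,
\enq
and define $r^J_n:=u_n-v^J_n$, so that $r^J_n|_{t=0}=0$. The structural point is that, because $\Lambda_{1,n}\equiv\mathrm{Id}$, the first profile $U^1=NSf(\phi_1)$ carries the force $f$, each rescaled profile $\Lambda_{j,n}U^j$ ($j\geq2$) solves the unforced system $(NS)$ by scaling invariance, and $w^J_n$ solves the heat equation; thus the force and all the self-interactions are already accounted for by the individual profile equations.

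Next I would identify the exceptional set $I$ and establish the first bullet. From \reff{sum-L3} we have $\sum_j\|\phi_j\|_{L^3}^3<\infty$, so all but finitely many $\phi_j$ fall below the small-data threshold of Theorem \ref{well-posed-L3} and generate global solutions; letting $I:=\{j:T^*_j<\infty\}$ (a subset of the finitely many ``non-small'' indices) gives the claim, with $\tau_n:=\min_{j\in I}\lambda_{j,n}^2T^j$, and one fixes $J_0\geq\max I$ so that every exceptional profile is captured. On $[0,\tau_n]$ each finite-life profile is evaluated at rescaled time $t/\lambda_{j,n}^2\leq T^j<T^*_j$, hence stays bounded, while the tail of small profiles is globally controlled; together with the remainder estimates this yields the uniform bound \reff{per-bounded} once the perturbation estimate below is closed.

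The heart of the argument is the error analysis. Applying $\p_t-\La$ to $v^J_n$, adding $\mathbb{P}(v^J_n\cdot\nabla v^J_n)$ and subtracting $\mathbb{P}f$, the diagonal self-interactions cancel against the profile equations and the force is absorbed by $U^1$, leaving a source $F^J_n$ built from (a) the cross interactions $\mathbb{P}\,(\Lambda_{j,n}U^j\cdot\nabla\Lambda_{k,n}U^k)$ with $j\neq k$, and (b) all interactions involving $w^J_n$. The cross terms are small by the orthogonality of scales and cores, quantified in Proposition \ref{orthogonal-product}; the $w^J_n$ terms are small because $\|w^J_n\|_{\mathbb{L}^{1:\infty}_p(\infty)}\to0$, which follows from the remainder bound \reff{remainder} on $\psi^J_n$ together with the smoothing of the heat semigroup. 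Hence $\limsup_{n\to\infty}\|F^J_n\|\to0$ as $J\to\infty$ in the relevant source norm $\mathcal{L}^{p}(\dot{B}^{s_p+\frac{2}{p}-2}_{p,p})$. Since $r^J_n$ solves the perturbation equation around $v^J_n$ with zero data and source $F^J_n$, I would finally invoke Proposition \ref{large-linear} to deduce that $\|r^J_n\|_{\mathbb{L}^{p:\infty}_p(\tau_n)}$ is small, establishing \reff{wnrn}.

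I expect the main obstacle to be precisely the hypothesis of that perturbation result, whose smallness threshold for the source degrades like $\exp(-C\|v^J_n\|)$ in the $\mathbb{L}^p_p$-norm of the background. One must therefore first secure a bound on $\|v^J_n\|$ that is \emph{uniform in both $J$ and $n$}, and this is where the almost-orthogonality of the nonlinear profiles and the $\ell^3$-summability from \reff{sum-L3} are indispensable: only the finitely many profiles in $I$ may be large, while the infinite tail must be summed with negligible total size. Securing this uniform background bound — and, if necessary, adding the finite-life profiles in increasing order of $\lambda_{j,n}^2T^*_j$ so that each application of Proposition \ref{large-linear} is legitimate — is, in my view, the crux of the proof.
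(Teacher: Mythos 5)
Your overall architecture --- transporting the initial-data decomposition through the flow, isolating the finite blow-up set $I$ via \reff{sum-L3}, writing $r^J_n$ as a zero-data solution of a perturbed equation whose source consists of cross interactions and $w^J_n$-interactions, and closing with Proposition \ref{large-linear} --- is the same as the paper's, and your observation that the exponential degradation in Proposition \ref{large-linear} forces a background bound uniform in $J$ and $n$ is correct as far as it goes. But there is a genuine gap exactly at the point where you invoke that proposition: you place the entire background $v^J_n$, in particular the forced profile $U^1=NSf(\phi_1)$, into the drift term $g$, which must belong to $\mathcal{L}^p([0,T],\dot{B}^{s_p+\frac{2}{p}}_{p,p})$. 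This is false for $U^1$. By Theorem \ref{well-posed-L3}, $U^1$ relaxes to the steady state $U_f\neq0$, and only the difference $V^1:=U^1-U_f$ lies in the time-integrable spaces $\mathbb{L}^{p:\infty}_p$; the steady part $U_f$ is time-independent, hence has infinite $\mathcal{L}^p$-in-time norm when $\tau_n=\infty$ (and an uncontrolled one, growing like $\tau_n^{1/p}$, otherwise), and moreover $U_f$ is merely in $L^3$, which does not embed into $\dot{B}^{s_p+\frac{2}{p}}_{p,p}$ since that space has positive regularity for $3<p<5$. So the obstruction is not the uniformity of a finite quantity, as you suggest in your last paragraph: a piece of your background is not in the required drift space at all, and no amount of almost-orthogonality repairs that.

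This is precisely the new difficulty relative to \cite{gkp12}, and it is why Proposition \ref{large-linear} carries a separate hypothesis for a small \emph{time-independent} linear term $U$. The paper splits the forced profile as $NSf(\phi_{j_0})=U_f+V^{j_0}$, puts only $V^{j_0}$ (together with the unforced rescaled profiles and $W^{J,1}_n$) into the drift $G^{J,1}_n$, and treats $Q(\cdot\,,U_f)$ as a small bounded linear operator, the smallness $\|Q(h,U_f)\|\leq c_1\|h\|$ coming from the $L^{\infty}_t L^3$ product law (the last item of Proposition \ref{product-law}) combined with $\|U_f\|_{L^3}\leq 2\|\La^{-1}f\|_{L^3}<2c$. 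Correspondingly, your source-term analysis also needs the cross terms $Q(\Lambda^{-1}_{1,n}U_f,U^{j,1}_n)$, which cannot be handled by the standard orthogonality \reff{orth-a}; they require the second, time-independent orthogonality statement \reff{orth-r} of Proposition \ref{orthogonal-product}, which is one of the paper's new ingredients. A smaller, repairable omission: the paper runs all estimates after rescaling by $\Lambda^{-1}_{1,n}$ and reordering the profiles so that the drift and source lemmas (Lemmas \ref{drift-term} and \ref{source-term}) are proved on a fixed interval $[0,T_1]$, $T_1<T^*_1$, uniformly in $n$; your untransformed formulation on the $n$-dependent intervals $[0,\tau_n]$ would need an analogous device.
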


We recall the following important orthogonality result without proof. Its proof is the same as the proof of Claim 3.3 of \cite{gkp12}, as it just depends on orthogonality property on scales/core.  To state the  result, note first that  an application of Theorem \ref{profile-NSf} yields a non- empty blow-up set $I\subset\{1,\dots, J_0\}$. Then we can re-order those first $J_0$ profiles, thanks to the orthogonality \reff{ortho-lambda} of the scales $\lambda_{j,n}$ so that for $n_0=n_0(J_0)$ sufficiently large, we have
\beqq\label{re-order}
\forall n\geq n_0,~~~1\leq j\leq j'\leq J_0\Rightarrow \lambda^2_{j,n}T^*_j\leq\lambda^2_{j',n}T^*_{j'}
\enqq  
(some of these terms may equal infinity).
\begin{proposition}\label{orth-L3}
	Let $(u_{0,n})_{n\geq 1}$ be a bounded sequence in $L^{3}$ and for which the set $I$ of blow-up profile indices resulting from an an application of Theorem \ref{profile-NSf} is non-empty. After re-ordering the profiles in the profile decomposition of $u_n:=NSf(u_{0,n})$ such that \reff{re-order} holds for some $J_0$, setting $t_n:=\lambda_{1,n}^2s$ for $s\in[0,T^*_1)$ one has (after possibly passing to a subsequence in $n$):
	\beqq
	\|u_n(t)\|_{L^3}^3=\|(\Lambda_{1,n}U^1)(t_n)\|^3_{L^3}+\|u(t_n)-(\Lambda_{1,n}U^1)(t_n)\|^3_{L^3}+\eps(n,s),
	\enqq 
	where $\eps(n,s)\to0$ as $n\to\infty$ for each fixed $s\in[0,T^*_1)$.
	
\end{proposition}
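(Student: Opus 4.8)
The plan is to establish the asymptotic $L^3$-orthogonality between the first (blow-up) profile and everything else by peeling off that profile and showing that all cross terms vanish. Write $a_n:=(\Lambda_{1,n}U^1)(t_n)$ and $b_n:=u_n(t_n)-a_n$. Because the reordering \reff{re-order} places the profile with the smallest $\lambda_{j,n}^2T^*_j$ first and $I$ is non-empty, one has $1\in I$, $T^*_1<\infty$ and $\tau_n=\lambda_{1,n}^2T^*_1$; since $s<T^*_1$, it follows that $t_n=\lambda_{1,n}^2s<\tau_n$, so the decomposition of Theorem \ref{profile-NSf} is valid at time $t_n$ and gives, for each $J>J_0$ and $n>N(J)$,
\[
b_n=\sum_{j=2}^J(\Lambda_{j,n}U^j)(t_n)+w^J_n(t_n)+r^J_n(t_n).
\]
The starting point is the elementary inequality $\big||a+b|^3-|a|^3-|b|^3\big|\le C(|a|^2|b|+|a||b|^2)$ in $\RR^3$, which reduces the claim to showing that $\int_{\RR^3}\big(|a_n|^2|b_n|+|a_n||b_n|^2\big)\,dx\to0$ as $n\to\infty$.

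Next I would treat the profile--profile interactions. After the change of variables $y=(x-x_{1,n})/\lambda_{1,n}$, which leaves the $L^3$-norm invariant, a cross term $\int|a_n|^{\alpha}\,|(\Lambda_{j,n}U^j)(t_n)|^{\beta}\,dx$ (with $\alpha+\beta=3$, $\alpha,\beta>0$, $j\ge2$) becomes $(\lambda_{1,n}/\lambda_{j,n})^{\beta}\int|U^1(s,y)|^{\alpha}\,\big|U^j\big(\tfrac{t_n}{\lambda_{j,n}^2},\tfrac{x_{1,n}-x_{j,n}}{\lambda_{j,n}}+\tfrac{\lambda_{1,n}}{\lambda_{j,n}}y\big)\big|^{\beta}\,dy$. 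Approximating $U^1(s)$ and $U^j$ in $L^3$ by compactly supported functions and using the orthogonality \reff{ortho-lambda} of $(\lambda_{1,n},x_{1,n})\perp(\lambda_{j,n},x_{j,n})$, each of the three mutually exclusive regimes --- $\lambda_{1,n}/\lambda_{j,n}\to0$, $\lambda_{1,n}/\lambda_{j,n}\to\infty$ (rescale in the other variable), or comparable scales with separating cores --- forces this integral to $0$; since only finitely many indices $j\le J$ occur, their total contribution tends to $0$ as $n\to\infty$ for each fixed $J$.

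It then remains to control the interaction of $a_n$ with the remainder $R^J_n:=w^J_n(t_n)+r^J_n(t_n)$. For fixed $J$ this remainder lies in $L^3$ with a (possibly $J$-dependent) uniform bound, coming from \reff{per-bounded}, from the $L^3$-boundedness of the finitely many profiles, and from $\|w^J_n(t_n)\|_{L^3}\le\|\psi^J_n\|_{L^3}$; at the same time it is small in the scaling-critical space $\dot B^{s_p}_{p,p}$ at the single time $t_n$, this single-time smallness being extracted from the $\mathcal L^\infty$ endpoint of the norms in \reff{wnrn}. I would interpolate the uniform $L^3$-bound against this smallness to obtain smallness of $R^J_n$ in the intermediate scaling-invariant Besov spaces $\dot B^{s_r}_{r,r}$, $3<r<p$. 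Using that these spaces are invariant under $\Lambda_{1,n}$ and approximating $U^1(s)$ in $L^3$ by a smooth compactly supported $g$ (the $L^3$ approximation error being absorbed by the uniform $L^3$ bound on $R^J_n$), the cross terms $\int|a_n|^2|R^J_n|$ and $\int|a_n||R^J_n|^2$ reduce, after the same change of variables, to scale-invariant pairings of $g$ with $\Lambda_{1,n}^{-1}R^J_n$, which are small by duality. Consequently, for each fixed $J$, $\limsup_{n\to\infty}\int(|a_n|^2|b_n|+|a_n||b_n|^2)\,dx$ is bounded by the approximation error plus a constant times $\lim_{n\to\infty}\big(\|w^J_n\|_{\mathbb{L}^{1:\infty}_p(\infty)}+\|r^J_n\|_{\mathbb{L}^{p:\infty}_p(\tau_n)}\big)$; since the quantity $\|u_n(t_n)\|_{L^3}^3-\|a_n\|_{L^3}^3-\|b_n\|_{L^3}^3$ does not depend on $J$, letting $J\to\infty$ and invoking \reff{wnrn} yields $\eps(n,s)\to0$.

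The main obstacle is precisely this last step. The remainders are small only in the scaling-critical, negative-regularity norms of \reff{wnrn}, in which pointwise products and absolute values are not directly meaningful, so the cross terms cannot be bounded by H\"older alone; the uniform $L^3$ bound must be interpolated against the critical smallness, and the spatial concentration of $a_n$ at scale $\lambda_{1,n}$ exploited through the scale-invariance of the intermediate spaces (the term $\int|a_n||R^J_n|^2$, carrying two factors of the remainder, being the most delicate). The profile--profile orthogonality, by contrast, is the routine change-of-variables computation of Claim 3.3 in \cite{gkp12}.
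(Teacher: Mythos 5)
Your reduction via the pointwise inequality $\bigl||a+b|^3-|a|^3-|b|^3\bigr|\le C(|a|^2|b|+|a||b|^2)$ and your treatment of the profile--profile interactions are fine (that part is indeed the routine change-of-variables argument). The genuine gap is exactly at the step you flag as the main obstacle, and your proposed fix does not close it: after interpolating the uniform $L^3$ bound on $R^J_n:=w^J_n(t_n)+r^J_n(t_n)$ against its $\dot B^{s_p}_{p,p}$-smallness to get smallness in the intermediate critical spaces $\dot B^{s_r}_{r,r}$, you cannot conclude "by duality", because the cross terms you must kill are $\int|g|^2|R^J_n|$ and $\int|g|\,|R^J_n|^2$: they involve the \emph{absolute value} (and the square) of the remainder, and neither $|R^J_n|$ nor $|R^J_n|^2$ inherits any smallness in negative-regularity norms. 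Concretely, take $R_n(x)=\sin(nx_1)\chi(x)$ with $\chi$ a fixed bump: this is bounded in $L^3$, tends to $0$ in $\dot B^{s_r}_{r,r}$ for every $r>3$ (so it passes every norm hypothesis you use, and it is a legitimate profile-decomposition remainder since all its rescaled weak limits vanish), yet $\int|g|^2|R_n|\to\frac{2}{\pi}\int|g|^2\chi>0$ and $\int|g|\,|R_n|^2\to\frac{1}{2}\int|g|\chi^2>0$. So the quantity you reduced the proposition to, $\int(|a_n|^2|b_n|+|a_n||b_n|^2)$, need not tend to zero at all: the pointwise inequality discards precisely the sign/phase cancellation on which the statement rests. (A symptom: at $s=0$ your argument would give $\|\phi_1+R_n\|_{L^3}^3=\|\phi_1\|_{L^3}^3+\|R_n\|_{L^3}^3+o(1)$ for any oscillating remainder, which is false --- expanding $(|\phi_1|^2+\chi^2\sin^2(nx_1))^{3/2}$ produces a surviving term of order $\int|\phi_1|\chi^2$.)

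For comparison, the paper does not prove this proposition at all; it invokes Claim 3.3 of \cite{gkp12}, and there the decoupling is not obtained from absolute-value cross terms in the plain $L^3$ norm but from the structure behind the \emph{equivalent} norm $\|\cdot\tilde{\|}_{L^3}$ of Theorem \ref{profile-L3} (Koch's construction), which decouples contributions living at asymptotically orthogonal scales/cores and Besov-small remainders irrespective of signs. If you want a self-contained argument in the spirit of yours, the one mechanism actually available at fixed rescaled time $s>0$ is smoothing by the flow: $\Lambda_{1,n}^{-1}w^J_n(t_n)=e^{s\La}\bigl(\Lambda_{1,n}^{-1}\psi^J_n\bigr)$, and for fixed $s>0$ the heat semigroup converts $\dot B^{s_p}_{p,p}$-smallness into $L^p$-smallness, after which H\"older does kill the $w$-part of the cross terms; but this mechanism is absent from your proposal, it fails at $s=0$, and it still leaves the term $r^J_n(t_n)$ --- which is a Duhamel integral, not a free heat evolution --- requiring a separate argument. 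That missing input (equivalent norm, or a.e.-convergence as in Brezis--Lieb, or flow smoothing for the full remainder) is the substance of the cited Claim 3.3, and without it your proof does not go through.
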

\begin{proof}[Proof of Theorem \ref{profile-NSf} ]
	Let $(u_{0,n})_{n\geq 1}$ be a bounded sequence in $L^{3}$. We first use Theorem \ref{profile-L3} to decompose the above sequence. \\
	Then with the notation of Theorem \ref{profile-NSf} 
	\beq
	u_{0,n}=\sum_{j=1}^J\Lambda_{j,n}\phi_j+\psi^J_n.
	\enq 
	 We define 
	\beq
	&u_n:=NSf(u_{0,n}),~~U^1:=NSf(\phi_1)\in C([0,T^*_1),L^3(\RR^3)),\\
	&U^j:=NS(\phi_j)\in C([0,T^*_j),L^3(\RR^3))~~\mathrm{and}~~w^J_n:=e^{t\La}(\psi^J_n).
	\enq 
	By \reff{remainder} and standard linear heat estimates we have 
	\beq
	\lim_{J\to\infty}\Big(\limsup_{n\to\infty}\|w^J_n\|_{\mathbb{L}^{1:\infty}_p(\infty)}\Big)=0.
	\enq 
	According to \reff{sum-L3}, we have  for any $p>3$
	\beqq\label{sum-besov}
	\Big\|\big(\|\phi_j\|_{\dot{B}^{s_p}_{p,p}}\big)_{j=1}^{\infty}\Big\|_{\ell^p}\lesssim \Big\|\big(\|\phi_j\|_{L^3}\big)_{j=1}^{\infty}\Big\|_{\ell^3}\lesssim\liminf_{n\to\infty}\|u_{0,n}\|_{L^3},
	\enqq
	which implies that, for any $j\geq 2$,
	\beq
	U^j\in\mathbb{L}^{1:\infty}_p(T<T^*_j)
	\enq 
	and there exists $J_0>0$ such that for any $j\geq J_0$, $T^*_j=\infty$. Moreover, for any $j\geq J_0$
	\beq
	U^j\in \mathbb{L}^{1:\infty}_p(\infty)~~\mathrm{and}~~\|U^j\|_{\mathbb{L}^{1:\infty}_p(\infty)}\lesssim\|\phi_j\|_{\dot{B}^{s_p}_{p,p}}.
	\enq 
	Hence, $I$ will be a subset of $\{1,\dots, J_0\}$ which proves the first part of the first statement in Theorem \ref{profile-NSf}.
	
	From now on, we restrict $p\in(3,5)$. By the local Cauchy theory we can solve $(NSf)$ with initial data $u_{0,n}$ for each integer $n$, and produce a unique  solution $u_n\in C([0,T^*_n),L^3(\RR^3))$, where $T^*_n$ is the life span of $u_n$. Now we define, for any $J\geq1$,
	\beq
	r^J_n:=u_n-\sum_{j=1}^{J}\Lambda_{j,n}U^j-w^J_n,
	\enq 
	where we recall that $\Lambda_{1,n}U^1=U^1$. We mention that the life span of $\Lambda_{j,n}U^j$ is $\lambda^2_{j,n}T^*_j$. Therefore, the function $r^J_n(t,x)$ is defined a priori for $t\in[0,t_n)$, where
	\beq
	t_n:=\min(T^*_n;\tau_n)
	\enq 
	with the notation of Theorem \ref{profile-NSf}. Our main goal is to prove that $r^J_n$ is actually defined on $[0,\tau_n]$ (at least if $J$ and $n$ are large enough), which will be a consequence of perturbation theory for the Navier-Stokes equations, see Proposition \ref{large-linear}. In  the process, we shall obtain the uniform limiting property, namely,
	\beqq
	\lim_{J\to\infty}\Big(\limsup_{n\to\infty}\|r^J_n\|_{\mathbb{L}^{p:\infty}_p(\tau_n)}\Big)=0.
	\enqq

	Let us write the equation satisfied by $r^J_n$. We adapt the same method as \cite{gkp2} and \cite{gkp12}.
	 It turns out to be easier to write that equation after a re-scaling in space-time. For convenience, let use re-order the functions $\Lambda_{j,n}U^j$, for $1\leq j\leq J_0$, in such a way that, for some $n_0=n_0(J_0)$ sufficiently large, we have as in \cite{gkp12},
	\beq
	\forall n\geq n_0,~~j\leq j'\leq J_0\Rightarrow \lambda_{j,n}T^*_j\leq \lambda_{j',n}T^*_{j'}.
	\enq 
	 And we define $1\leq j_0\leq J_0$ as the integer such that $\Lambda_{j_0,n}\equiv\mathrm{Id}$. And $\Lambda_{j_0,n}U^{j_0}=NSf(\phi_{j_0})=U_f+V^{j_0}$ (see Theorem \ref{well-posed-L3}).
	 We note that $\lambda^2_{j,n}T^*_j$ is the life span of $\Lambda_{j,n}U^j$.
	
	The inverse of our dilation/translation operator $\Lambda_{j,n}$ is 
	\beqq
	\Lambda_{j,n}^{-1}f(s,y):=\lambda_{j,n}f(\lambda^2_{j,n}s,\lambda_{j,n}y+x_{j,n}).
	\enqq 
	Then we define, for any integer $J$,
	\beq
	&j\leq J,~~U^{j,1}_n:= \Lambda^{-1}_{1,n}\Lambda_{j,n} U^j,~~R^{J,1}_n:=\Lambda^{-1}_{1,n} r^J_n,~~V^{j_0,1}_n=\Lambda_{1,n}^{-1}V^{j_0} \\
	& U^{1}_f:=\Lambda_{1,n}^{-1}U_f,~~  W^{J,1}_n:=\Lambda^{-1}_{1,n} w^J_n~~\mathrm{and}~~U^1_n:=\Lambda^{-1}_{1,n}u_n.
	\enq 
	Clearly we have
	\beq
	R^{J,1}=U^1_n-(\sum_{j=1}^J U^{j,1}_n+W^{J,1}_n)
	\enq 
	and $R^{J,1}_n$ is a divergence free vector field, solving the following system:
	\beqq
   \left\{
  \begin{array}{ll}
    \p_t R^{J,1}_n-\La R^{J,1}_n+\mathbb{P}(R^{J,1}_n\cdot\nabla R^{J,1}_n)+Q(R^{J,1}_n,U^{1}_f+G^{J,1}_n)=F^{J,1}_n,\\
   R^{J,1}_n|_{t=0}=0,
  \end{array}
  \right.
\enqq 
	where we recall that $\mathbb{P}:=\mathrm{Id}-\nabla\La^{-1}(\nabla\cdot)$ is the projection onto divergence free vector fields, and where
	\beq
	Q(a,b):=\mathbb{P}((a\cdot\nabla)b+(b\cdot\nabla)a)
	\enq 
	for two vector fields $a,b$. Finally we have defined
	\beq
	G^{J,1}_n:=\sum_{j\neq j_0}^JU^{j,1}_n+W^{J,1}_n+V_n^{j_0,1},
	\enq 
	and
	\beq
	F^{J,1}_n,=-\frac{1}{2}Q(W^{J,1}_n,W^{J,1}_n)-\frac{1}{2}\sum_{j\neq j'}^JQ(U^{j,1}_n,U^{j',1}_n)-\sum_{j=1}^JQ(U^{j,1}_n,W^{J,1}_n).
	\enq
	In order to use perturbative bounds on this system, we need a uniform control on the drift term $G^{J,1}_n$ and smallness of the forcing term $F^{J,1}_n$. The results are the following.
	\begin{lemma}\label{drift-term}
		Fix $T_1<T^*_1$. The sequence $(G^{J,1}_n)_{n\geq1}$ is bounded in $\mathcal{L}^p([0,T_1],\dot{B}_{p,p}^{s_p+\frac{2}{p}})$, uniformly in J, which means that 
		\beq
		\lim_{J\to\infty}\limsup_{n\to\infty}\|G^{J,1}_n\|_{\mathcal{L}^p([0,T_1],\dot{B}_{p,p}^{s_p+\frac{2}{p}})}=0.
		\enq 
	\end{lemma}
	 The proof of the above lemma is the same as the proof of Lemma 2.5 in \cite{gkp12}, as it just depends on orthogonality property on scales/core. 
	\begin{lemma}\label{source-term}
		Fix $T_1<T^*_1$. The source term $F^{J,1}_n$ goes to zero for each $J\in\mathbb{N}$, as $n$ goes to infinity, in the space $\mathcal{F}:= \mathcal{L}^p([0,T_1],\dot{B}_{p,p}^{s_p+\frac{2}{p}-2})+\mathcal{L}^{\frac{p}{2}}([0,T_1],\dot{B}_{p,p}^{s_p+\frac{4}{p}-2})$. In precisely,
		\beq
		\lim_{J\to\infty}\limsup_{n\to\infty}\|F^{J,1}_n\|_{\mathcal{F}}=0.
		\enq 
	\end{lemma}
	
	Assuming these lemmas to be true, the end of the proof of the theorem is a direct consequence of Proposition \ref{large-linear}.
	
\end{proof}

Now let us prove Lemma \ref{source-term}.
\begin{proof}[Proof of Lemma \ref{source-term}]
    We first notice that 
    \beq
    \begin{aligned}
	F^{J,1}_n,=&-\frac{1}{2}Q(W^{J,1}_n,W^{J,1}_n)-\frac{1}{2}\sum_{\begin{subarray}{c}
	j\neq j',j\neq j_0\\
	j'\neq j_0	
	\end{subarray}}^JQ(U^{j,1}_n,U^{j',1}_n)-\sum_{j=1,j\neq j_0}^JQ(U^{j,1}_n,W^{J,1}_n)\\
	&-\sum_{j=1,j\neq j_0}^JQ(U^{j_0,1}_n,U^{j,1}_n)-Q(U^{j_0,1}_n,W^{J,1}_n).
	\end{aligned}
	\enq 
	And we note that the structure of 
	\beq
	A_n^J:=-\frac{1}{2}Q(W^{J,1}_n,W^{J,1}_n)-\frac{1}{2}\sum_{\begin{subarray}{c}
	j\neq j',j\neq j_0\\
	j'\neq j_0	
	\end{subarray}}^JQ(U^{j,1}_n,U^{j',1}_n)-\sum_{j=1,j\neq j_0}^JQ(U^{j,1}_n,W^{J,1}_n)
	\enq 
	is the same as the $G^{J,0}_{n}$ of Lemma 2.7 in \cite{gkp12}. As a consequence of Lemma 2.7 in \cite{gkp12}, we obtain
	\beq
		\lim_{J\to\infty}\limsup_{n\to\infty}\|A^{J}_n\|_{\mathcal{F}}=0.
    \enq 
    Hence to finish the proof of Lemma \ref{source-term}, we need to show
    \beq
    \lim_{J\to\infty}\limsup_{n\to\infty}\|B^{J}_n\|_{\mathcal{F}}=0,
    \enq 
    where 
    \beq
    B^J_n:=-\sum_{j=1,j\neq j_0}^JQ(U^{j_0,1}_n,U^{j,1}_n)-Q(U^{j_0,1}_n,W^{J,1}_n).
    \enq 
	
	By product laws and scaling invariance, we first have
	\beq
	\|Q(U^{j_0,1}_n,W^{J,1}_n)\|_{\mathcal{L}^p([0,T_1],\dot{B}_{p,p}^{s_p+\frac{2}{p}-2})}
	&\lesssim&\|W^{J,1}_n\|_{\mathcal{L}^p([0,T_1],\dot{B}_{p,p}^{s_p+\frac{2}{p}})}\|U^{j_0,1}_n\|_{\mathcal{L}^{\infty}([0,T_0],\dot{B}^{s_p}_{p,p})}\\
	&\lesssim&\|\psi^J_n\|_{\dot{B}^{s_p}_{p,p}}\|\phi_{j_0}\|_{\dot{B}^{s_p}_{p,p}},
	\enq 
	implies that
	\beq
	\lim_{J\to\infty}\limsup_{n\to\infty}\|Q(U^{j_0,1}_n,W^{J,1}_n)\|_{\mathcal{L}^p([0,T_1],\dot{B}_{p,p}^{s_p+\frac{2}{p}-2})}=0.
	\enq 
	Now we are left with proving that 
	\beq
	\lim_{J\to\infty}\limsup_{n\to\infty}\|\sum_{j\neq j'}^JQ(U^{j,1}_n,U^{j',1}_n)\|_{\mathcal{F}}=0.
	\enq 
	We can write $\sum_{j=1,j\neq j_0}^JQ(U^{j_0,1}_n,U^{j,1}_n)$ as the following way:
	\beq
	\sum_{j=1,j\neq j_0}^JQ(U^{j_0,1}_n,U^{j,1}_n)=\sum_{j\neq j_0}Q(V^{j_0,1}_n,U^{j,1}_n)
	+\sum_{j\neq j_0}Q(\Lambda_{1,n}^{-1}U_f, U^{j,0}_n).
	\enq 
	Since for any $j\neq j_0$ $U^{j,1}_n\in\mathbb{L}^{1:\infty}_p(T_0)$, $V^{j_0,1}_n\in \mathbb{L}^{p:\infty}_{p}(T_0)$ and  $3<p<5$, by \reff{orth-a} in  Proposition \ref{orthogonal-product}, we have for all $j',j\neq j_0$
	\beq
	\lim_{n\to\infty}\|Q(U^{j,1}_n,V^{j_0,1}_n)\|_{\mathcal{L}^{\frac{p}{2}}([0,T_1],\dot{B}_{p,p}^{s_p+\frac{4}{p}-2})}=0.
	\enq 
	And according to $U_f\in L^{3}(\RR^3)$, we have
	\beq
	\lim_{n\to\infty}\|Q(U^{j,1}_n,\Lambda_{1,n}^{-1}U_f )\|_{\mathcal{L}^p([0,T_1],\dot{B}_{p,p}^{s_p+\frac{2}{p}-2})}=0
	\enq 
	by Proposition \ref{orthogonal-product}. By the above two relations, we have
	\beq
	\lim_{J\to\infty}\limsup_{n\to\infty}\|\sum_{j=1,j\neq j_0}^JQ(U^{j_0,1}_n,U^{j,1}_n)\|_{\mathcal{L}^p([0,T_1],\dot{B}_{p,p}^{s_p+\frac{2}{p}-2})}=0.
	\enq 
	Lemma \ref{source-term} is proved.
\end{proof}

\subsection{Orthogonality Property}
In this paragraph, we show the orthogonality properties used in the proof of Lemma \ref{source-term}. The first statement of Proposition \ref{orthogonal-product} is just a particular case of orthogonality property given in \cite{gkp2} (see the proof Lemma 3.3 in \cite{gip2}). By the same idea in \cite{gkp2}, we give a orthogonality property in the case that one of the element in the product is time-independent.
\begin{proposition}\label{orthogonal-product}
We assume that $(\lambda_{1,n},x_{1,n})_{n\in\mathbb{N}}$ and $(\lambda_{2,n},x_{2,n})_{n\in\mathbb{N}}$ are orthogonal. Let $T\in\RR_+\cup\{+\infty\}$. Then the following properties hold:
	\begin{enumerate}
		\item Let $p>3$ and $1-\frac{3}{p}<\frac{1}{a}<1$. Suppose that $v,w\in\mathcal{L}^{2a}([0,T],\dot{B}_{p,p}^{s_p+\frac{1}{a}})$.  Then we have
	          \beqq\label{orth-a}
	          \lim_{n\to\infty}\|(\Lambda_{1,n}v)(\Lambda_{2,n}w)\|_{\mathcal{L}^{a}([0,T_n],\dot{B}^{s_p+\frac{2}{a}-1}_{p,p})}=0,
	          \enqq 
	          where $T_n:=\min\{\lambda_{1,n}^2T,\lambda_{2,n}^2T\}$.
		\item Let $p>3$ and $2<r<\frac{2p}{p-3}$. Suppose that $U\in L^{3}(\RR^3)$ and $v\in\mathcal{L}^{r}([0,T],\dot{B}_{p,p}^{s_p+\frac{2}{r}})$.
	         \beqq\label{orth-r}
	         \lim_{n\to\infty}\|(\Lambda_{1,n}U)(\Lambda_{2,n}v)\|_{\mathcal{L}^{r}([0,T'_n],\dot{B}_{p,p}^{s_p+\frac{2}{r}-1})}=0,
	         \enqq
	         where $T'_n:=\lambda_{2.n}T$.
	\end{enumerate}
\end{proposition}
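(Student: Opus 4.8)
The first statement is, as noted in the text, a particular case of the orthogonality estimate of \cite{gkp2}, so for it I would simply invoke that reference; the real work is the second statement, where the factor $U$ is time-independent. My plan is to run the standard density-plus-scaling argument. First I would record the \emph{uniform} bilinear bound that makes a density reduction legitimate. By the product law of Proposition \ref{product-law} together with the embedding $L^3\hookrightarrow\dot{B}^{s_p}_{p,p}$ (valid since $p>3$), one has
\begin{equation*}
\|(\Lambda_{1,n}U)(\Lambda_{2,n}v)\|_{\mathcal{L}^{r}([0,T'_n],\dot{B}_{p,p}^{s_p+\frac{2}{r}-1})}\lesssim\|\Lambda_{1,n}U\|_{\dot{B}^{s_p}_{p,p}}\,\|\Lambda_{2,n}v\|_{\mathcal{L}^{r}([0,T'_n],\dot{B}_{p,p}^{s_p+\frac{2}{r}})},
\end{equation*}
where the constraint $2<r<\frac{2p}{p-3}$ is exactly what guarantees both that the regularity indices sum correctly to $s_p+\frac{2}{r}-1$ and that the time exponents obey Hölder ($U$ sits trivially in $L^\infty$ in time, $v$ in $\mathcal{L}^r$, giving $\mathcal{L}^r$). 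Since $\dot{B}^{s_p}_{p,p}$ and $\mathcal{L}^r([0,T],\dot{B}_{p,p}^{s_p+\frac{2}{r}})$ are invariant under the Navier--Stokes dilations, the norms $\|\Lambda_{1,n}U\|_{\dot{B}^{s_p}_{p,p}}=\|U\|_{\dot{B}^{s_p}_{p,p}}\lesssim\|U\|_{L^3}$ and $\|\Lambda_{2,n}v\|_{\mathcal{L}^r}=\|v\|_{\mathcal{L}^r([0,T],\dot{B}_{p,p}^{s_p+\frac{2}{r}})}$ are independent of $n$. Hence the bilinear map is bounded with an $n$-independent constant, and it suffices to prove \reff{orth-r} for $U,v$ ranging over the dense subclasses $C_c^\infty(\RR^3)$ and $C_c^\infty((0,T)\times\RR^3)$, approximating in $L^3$ and in $\mathcal{L}^r(\dot{B}_{p,p}^{s_p+\frac{2}{r}})$ and absorbing the errors through the uniform bound.

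Next I would establish the vanishing for such smooth compactly supported $U$ and $v$ by splitting according to the orthogonality relation \reff{ortho-lambda}. Since its left-hand side diverges, after passing to a subsequence exactly one of two regimes occurs. If the scales stay comparable, then necessarily $|x_{1,n}-x_{2,n}|/\lambda_{1,n}\to\infty$; the spatial support of $\Lambda_{1,n}U$ is a ball of radius $\sim\lambda_{1,n}$ about $x_{1,n}$ and that of $\Lambda_{2,n}v$ a ball of comparable radius about $x_{2,n}$, so for $n$ large these are disjoint and the product is identically zero. If instead the scales separate, say (up to exchanging the two roles) $\lambda_{2,n}/\lambda_{1,n}\to0$, then I would perform the change of variables $y=(x-x_{2,n})/\lambda_{2,n}$: the factor $\Lambda_{2,n}v$ becomes $v$ at unit scale, while $\Lambda_{1,n}U$ becomes $\lambda_{1,n}^{-1}U\big((\lambda_{2,n}y+x_{2,n}-x_{1,n})/\lambda_{1,n}\big)$, a smooth function varying only on the coarse scale $\lambda_{1,n}/\lambda_{2,n}\to\infty$. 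Estimating the critical product norm through a finite number of its derivatives (via Bernstein and the compact space/frequency localization of the smooth factors) produces an explicit positive power of $\lambda_{2,n}/\lambda_{1,n}$, which tends to $0$; the symmetric subcase is identical. Combining the two regimes with the density reduction yields \reff{orth-r}.

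The main obstacle, and the one place where the time-independence of $U$ must be handled with care, is the bookkeeping in the first step: one must verify that the product law lands in \emph{precisely} the critical target $\mathcal{L}^{r}(\dot{B}_{p,p}^{s_p+\frac{2}{r}-1})$ with a scale-invariant constant, since $U$ carries no time dilation and so the dimensional counting differs from the fully time-dependent situation of \reff{orth-a}. Once the indices are matched---which is exactly the role of the constraint $2<r<\frac{2p}{p-3}$ and of the $L^3$--$\dot{B}^{s_p}_{p,p}$ embedding---the rest follows the scheme of \cite{gkp2}, the scale-separation estimate being the only genuinely quantitative computation.
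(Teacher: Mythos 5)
Your overall scheme --- quoting the first statement from \cite{gkp2}, then proving the second by a density reduction followed by a case analysis on the orthogonality regimes --- is exactly the paper's, and your treatment of the compactly supported approximants (eventually disjoint supports when the scales are comparable and the cores separate; a positive power of the scale ratio when the scales separate) is sound, indeed slightly more complete than the paper's written proof, which only displays the regime $\lambda_{2,n}/\lambda_{1,n}\to0$. However, the step you yourself single out as the crux --- the uniform, scale-invariant bilinear bound legitimizing the density reduction --- is wrong as stated. Your estimate $\|(\Lambda_{1,n}U)(\Lambda_{2,n}v)\|_{\mathcal{L}^{r}([0,T'_n],\dot{B}_{p,p}^{s_p+\frac{2}{r}-1})}\lesssim\|\Lambda_{1,n}U\|_{\dot{B}^{s_p}_{p,p}}\|\Lambda_{2,n}v\|_{\mathcal{L}^{r}(\dot{B}_{p,p}^{s_p+\frac{2}{r}})}$ places both factors at integrability $p$, and then the remainder term in Bony's decomposition requires the regularities to sum to a positive number: $s_p+(s_p+\frac{2}{r})=2s_p+\frac{2}{r}>0$, i.e.\ $r<\frac{p}{p-3}$. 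The hypothesis only provides $r<\frac{2p}{p-3}$. For instance with $p=6$ one has $2s_p+\frac{2}{r}=-1+\frac{2}{r}<0$ for \emph{every} admissible $r\in(2,4)$, so no product law of the shape you invoke exists, and your uniform bound fails for all $p\geq6$ (and for $r\geq\frac{p}{p-3}$ when $3<p<6$). The constraint $2<r<\frac{2p}{p-3}$ does \emph{not} ``guarantee that the regularity indices sum correctly'' for your choice of spaces; what it guarantees is $s_p+\frac{2}{r}>0$, which is the condition allowing the fix below.

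The repair --- and this is what the paper actually does --- is to measure the time-independent factor at an integrability $q$ close to the critical exponent $3$ rather than at $p$: Proposition \ref{product-law}(1) gives $\|Uv\|_{\mathcal{L}^r(\dot{B}^{s_p+\frac{2}{r}-1}_{p,p})}\lesssim\|U\|_{\dot{B}^{s_q+\delta}_{q,q}}\|v\|_{\mathcal{L}^r(\dot{B}^{s_p+\frac{2}{r}-\delta}_{p,p})}$ under the condition $s_q+s_p+\frac{2}{r}>0$, and it is precisely $r<\frac{2p}{p-3}$ that makes this positivity achievable for some $3<q<\frac{3p}{p-3}$; the embedding to use is then $L^3\hookrightarrow\dot{B}^{s_q}_{q,q}$, with $\|\Lambda_{1,n}U\|_{\dot{B}^{s_q}_{q,q}}$ scale-invariant at $\delta=0$, which yields the correct $n$-independent bound for the density step. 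Note also that the paper extracts the quantitative decay from the very same product law by tilting $\delta\neq0$ on the compactly supported pieces: since $\|\Lambda_{1,n}U_{\eps}\|_{\dot{B}^{s_q+\delta}_{q,q}}\lesssim\lambda_{1,n}^{-\delta}$ while $\|\Lambda_{2,n}v_{\eps}\|_{\mathcal{L}^r(\dot{B}^{s_p+\frac{2}{r}-\delta}_{p,p})}\lesssim\lambda_{2,n}^{\delta}$, the main term is $O\big((\lambda_{2,n}/\lambda_{1,n})^{\delta}\big)$, so your change-of-variables/Bernstein computation, while workable, is unnecessary once the indices are set up this way. With the bilinear estimate corrected as above, the rest of your argument goes through and coincides with the paper's proof.
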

\begin{proof}
    As we mentioned above, \reff{orth-a} is   a particular case of orthogonality property given in \cite{gkp2}, we only need to prove the second statement of the proposition.
    
	 For any given $\eps>0$ one can find two compactly supported (in space and time) functions $v_{\eps }$ and $U_{\eps }$ such that 
	\beq
	\|v-v_{\eps}\|_{\mathcal{L}^{2a}([0,T],\dot{B}_{p,p}^{s_p+\frac{1}{a}})}+\|U-U_{\eps}\|_{L^3}\leq \eps .
	\enq 
	Product rules (along with the scale invariance of the scaling operators) gives that
	\beq
	&&\|(\Lambda_{1,n}v)(\Lambda_{2,n}(U-U_{\eps}))\|_{\mathcal{L}^{a}([0,T_n],\dot{B}^{s_p+\frac{2}{a}-1}_{p,p})}\\
	&&+\|(\Lambda_{1,n}(v-v_{\eps }))(\Lambda_{2,n}(U)\|_{\mathcal{L}^{a}([0,T_n],\dot{B}^{s_p+\frac{2}{a}-1}_{p,p})}\\
	&&+\|(\Lambda_{1,n}(v-v_{\eps }))(\Lambda_{2,n}(U-U_{\eps}))\|_{\mathcal{L}^{a}([0,T_n],\dot{B}^{s_p+\frac{2}{a}-1}_{p,p})}\lesssim\eps.
	\enq
	Then it is enough to prove that for  fixed $\eps>0$
	\beq
	\lim_{n\to\infty}\|(\Lambda_{1,n}U_{\eps })(\Lambda_{2,n}(v_{\eps})\|_{\mathcal{L}^{r}([0,T_n'],\dot{B}^{s_p+\frac{2}{r}-1}_{p,p})}=0.
	\enq 
	Again by Proposition \ref{product-law}, we have for some $3<q<\frac{3p}{p-3}$ and small enough $\delta>0$,
	\beq
	\|(\Lambda_{1,n}U_{\eps })(\Lambda_{2,n}(v_{\eps})\|_{\mathcal{L}^{r}([0,T_n'],\dot{B}^{s_p+\frac{2}{r}-1}_{p,p})}
	\lesssim\|\Lambda_{1,n}U_{\eps }\|_{\dot{B}_{q,q}^{s_q+\delta}}\|\Lambda_{2,n}w_{\eps }\|_{\mathcal{L}^{r}([0,T_n'],\dot{B}_{p,p}^{s_p+\frac{1}{a}-\delta})}.
	\enq 
	According to the fact that 
	\beq
	\|\Lambda_{1,n}U_{\eps }\|_{\dot{B}_{q,q}^{s_q+\delta}}\lesssim\lambda_{1,n}^{-\delta}\|U\|_{L^3},
	\enq 
	and
	\beq
	\|\Lambda_{2,n}v_{\eps }\|_{\mathcal{L}^{r}([0,T_n],\dot{B}_{p,p}^{s_p+\frac{2}{r}-\delta})}\lesssim\lambda_{2,n}^{\delta}\|v_{\eps }\|_{\mathcal{L}^{2a}([0,T],\dot{B}_{p,p}^{s_p+\frac{2}{r}})},
	\enq 
	we have
	\beq
	\|(\Lambda_{1,n}U_{\eps })(\Lambda_{2,n}(v_{\eps})\|_{\mathcal{L}^{r}([0,T_n'],\dot{B}^{s_p+\frac{2}{r}-1}_{p,p})}\lesssim \Big(\frac{\lambda_{2,n}}{\lambda_{1,n}}\Big)^{\delta}\to0,~~n\to\infty,
	\enq 
	if $\frac{\lambda_{2,n}}{\lambda_{1,n}}\to0$. Hence we prove \reff{orth-r}.
	
\end{proof}
\section{Estimates on perturbation equations}
Now we consider the following perturbation equation, 
\beqq
\left\{
  \begin{array}{ll}
    \p_t w-\La w+\frac{1}{2}Q(w,w)+Q(w,g) +Q(w,U)=f,\\
    w|_{t=0}=w_0,
  \end{array}
\right.
\enqq 
Let us state the following perturbation result.
\begin{proposition}\label{large-linear}
	Let $T\in\RR_+\cup\{+\infty\}$ and $3<p<5$. Suppose that $U\in L^3(\RR^3)$, $g\in\mathcal{L}^p([0,T],\dot{B}_{p,p}^{s_p+\frac{2}{p}}(\RR^3))$ and $f\in \mathcal{F}([0,T]):= \mathcal{L}^p([0,T],\dot{B}_{p,p}^{s_p+\frac{2}{p}-2})+\mathcal{L}^{\frac{p}{2}}([0,T],\dot{B}_{p,p}^{s_p+\frac{4}{p}-2})$. 
	
Suppose that for any $h\in \mathcal{L}^p([0,T],\dot{B}^{s_p+\frac{2}{p}}_{p,p})$, $$\|Q(h,U)\|_{\mathcal{L}^p([0,T],\dot{B}^{s_p+\frac{2}{p}-2}_{p,p})}<c_1\|h\|_{\mathcal{L}^p([0,T],\dot{B}^{s_p+\frac{2}{p}}_{p,p})},$$ where $c_1>0$ is a universal small constant. Then there exists a constant $C$ independent of $T$ and $\eps_0$ such that the following is true. If 
	\beq
	\|w_0\|_{\dot{B}^{s_p}_{p,p}}+\|f\|_{\mathcal{F}([0,T])}\leq \eps_0\mathrm{exp}\big(-C\|g\|_{\mathcal{L}^{p}([0,T],\dot{B}_{p,p}^{s_p+\frac{2}{p}})}\big),
	\enq  
	then $w\in \mathbb{L}^{p}_p(T)$ and 
	\beq
	\|w\|_{\mathbb{L}^p_p(T)}\leq C(\|w_0\|_{\dot{B}^{s_p}_{p,p}}+\|f\|_{\mathcal{F}([0,T])})\mathrm{exp}(C\|g\|_{\mathcal{L}^{p}([0,T],\dot{B}_{p,p}^{s_p+\frac{2}{p}})}).
	\enq 
\end{proposition}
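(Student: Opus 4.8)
The plan is to recast the equation in Duhamel form and then defeat the possibly large drift $g$ by a subdivision-and-continuation argument. Writing
\[
w(t)=e^{t\La}w_0+\int_0^t e^{(t-s)\La}f\,ds-\int_0^t e^{(t-s)\La}\Big(\tfrac12 Q(w,w)+Q(w,g)+Q(w,U)\Big)\,ds,
\]
I would first collect the linear ingredients on a generic time interval $I$: the heat estimate $\|e^{t\La}w_0\|_{\mathbb{L}^p_p(I)}\lesssim\|w_0\|_{\dot B^{s_p}_{p,p}}$, the source estimate $\|\int_0^t e^{(t-s)\La}f\,ds\|_{\mathbb{L}^p_p(I)}\lesssim\|f\|_{\mathcal{F}(I)}$, and the bilinear bound $\|\int_0^t e^{(t-s)\La}Q(a,b)\,ds\|_{\mathbb{L}^p_p(I)}\lesssim\|a\|_{\mathbb{L}^p_p(I)}\|b\|_{\mathcal{L}^p(I,\dot B^{s_p+\frac{2}{p}}_{p,p})}$, all of which follow from the product laws of Proposition \ref{product-law} together with the smoothing of the heat semigroup. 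The two linear-in-$w$ terms $Q(w,g)$ and $Q(w,U)$ are moved to the left-hand side: $Q(\cdot,U)$ contributes an operator of norm $\le Cc_1$ by the structural hypothesis (so that $U$ may be large in $L^3$ yet still be absorbed), and on an interval where $\|g\|_{\mathcal{L}^p}$ is small the term $Q(\cdot,g)$ contributes a small-norm operator as well; for $c_1$ and the local $g$-norm small, $\mathrm{Id}$ plus these operators is invertible, and the quadratic term $\tfrac12 Q(w,w)$ is then handled by a contraction in a small ball of $\mathbb{L}^p_p(I)$.

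The genuine difficulty is that $\|g\|_{\mathcal{L}^p([0,T],\dot B^{s_p+\frac{2}{p}}_{p,p})}$ may be arbitrarily large, so a single contraction on $[0,T]$ is impossible. Here I use that for $\rho=q$ the Chemin--Lerner norm coincides with $L^\rho([0,T],\dot B^{s}_{p,q})$, so that $\|g\|_{\mathcal{L}^p}^p$ is additive over disjoint time intervals; hence $[0,T]$ may be partitioned into $N$ consecutive intervals $I_k=[t_k,t_{k+1}]$ on each of which $\|g\|_{\mathcal{L}^p(I_k,\dot B^{s_p+\frac{2}{p}}_{p,p})}\le\eta_0$, where $\eta_0$ is an absolute threshold fixed once and for all so that the drift contribution is absorbable. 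The number of pieces is controlled by $N\lesssim 1+\eta_0^{-p}\|g\|^p_{\mathcal{L}^p([0,T])}$. On $I_0$, for $w_0$ and $f$ small enough, the construction above yields a unique solution with $\|w\|_{\mathbb{L}^p_p(I_0)}\le K(\|w_0\|_{\dot B^{s_p}_{p,p}}+\|f\|_{\mathcal{F}(I_0)})$, and, via the equation and heat smoothing, $w\in C(I_0,\dot B^{s_p}_{p,p})$ with the trace bound $\|w(t_1)\|_{\dot B^{s_p}_{p,p}}\le K(\|w_0\|_{\dot B^{s_p}_{p,p}}+\|f\|_{\mathcal{F}(I_0)})$.

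I then iterate, using $w(t_k)$ as initial datum on $I_k$. Writing $a_k:=\|w(t_k)\|_{\dot B^{s_p}_{p,p}}$, the local construction gives the linear recursion $a_{k+1}\le K(a_k+\|f\|_{\mathcal{F}(I_k)})$, whence $a_k\le K^k(\|w_0\|_{\dot B^{s_p}_{p,p}}+\|f\|_{\mathcal{F}([0,T])})$; summing the $p$-th powers of $\|w\|_{\mathbb{L}^p_p(I_k)}\le K(a_k+\|f\|_{\mathcal{F}(I_k)})$ over the $N$ intervals and using additivity of the $\mathbb{L}^p_p$ norm yields $\|w\|_{\mathbb{L}^p_p(T)}\lesssim K^N(\|w_0\|_{\dot B^{s_p}_{p,p}}+\|f\|_{\mathcal{F}([0,T])})$. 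Since $K^N=\exp(N\log K)$ and $N$ is governed by $\|g\|_{\mathcal{L}^p([0,T])}$, this produces the exponential prefactor of the statement; the same exponential, entering through its reciprocal, is precisely what the smallness hypothesis $\|w_0\|_{\dot B^{s_p}_{p,p}}+\|f\|_{\mathcal{F}([0,T])}\le\eps_0\exp(-C\|g\|)$ must guarantee, so that the local smallness required at each of the $N$ steps is preserved and the continuation never breaks before time $T$.

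The step I expect to be the main obstacle is exactly this continuation bookkeeping: one must fix $\eta_0$ and the ball radius independently of $T$ and $\eps_0$, verify that the local datum $a_k+\|f\|_{\mathcal{F}(I_k)}$ stays below the local-existence threshold at every one of the $N$ steps --- which is where the exponential smallness of $\|w_0\|_{\dot B^{s_p}_{p,p}}+\|f\|_{\mathcal{F}}$ relative to $\|g\|$ is consumed --- and check that the geometric growth in $k$ combines with the additivity of the $\mathbb{L}^p_p$ norm to give a clean global bound with $C$ independent of $T$ and $\eps_0$. A secondary technical point is to establish the bilinear and product-law estimates uniformly on both finite and infinite intervals, so that the case $T=+\infty$ is covered with no change; recovering the first-power dependence on $\|g\|_{\mathcal{L}^p}$ in the exponent (rather than a power of it) is a matter of refining how $N$ is counted and does not affect the applications, where the right-hand side is merely required to be a fixed positive constant.
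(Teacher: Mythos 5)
Your proposal is correct and follows essentially the same route as the paper: both arguments split $[0,T]$ into $N$ subintervals on which $\|g\|_{\mathcal{L}^p}$ falls below an absolute threshold, absorb $Q(\cdot,U)$ using the $c_1$-smallness hypothesis, propagate the solution interval by interval with a geometric loss of order $K^N$, and spend the exponential smallness of $\|w_0\|_{\dot{B}^{s_p}_{p,p}}+\|f\|_{\mathcal{F}([0,T])}$ to keep every local step below the smallness threshold (the paper implements the local step via an a priori estimate quoted from \cite{gip2} together with a maximal-time continuity/bootstrap argument rather than your interval-by-interval Duhamel contraction, which is a cosmetic difference). Your closing remark about the power of $\|g\|$ in the exponent is apt, and in fact applies equally to the paper's own proof: the greedy partition there also yields $N\sim\|g\|^p_{\mathcal{L}^p}$ pieces even though the text asserts $N\sim\|g\|_{\mathcal{L}^p}$, and, as you note, this discrepancy is immaterial for the applications.
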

The proof the proposition follows the estimates of \cite{gip2} (see in particular Proposition 4.1 and Theorem 3.1 of \cite{gip2}). The main difference is the absence of an exterior force and a small time-independent drift term in \cite{gip2}, but those terms are added with no difficulty to the estimates.
\begin{proof}
	By Proposition 4.1 of \cite{gip2}, for any $\alpha,\beta\in[0,T]$ , we have the following estimates
	\beqq\label{p-estimate}
	\begin{split}
	  \|w\|_{\mathbb{L}^{p:\infty}_p([\alpha,\beta])}&\leq K\|w(\alpha)\|_{\dot{B}^{s_p}_{p,p}}+K\|f\|_{\mathcal{F}([\alpha,\beta])}+K\|w\|^2_{\mathcal{L}^{p}([\alpha,\beta],\dot{B}_{p,p}^{s_p+2/p})}\\
	&+K(c_1+\|g\|_{\mathcal{L}^{p}([\alpha,\beta],\dot{B}_{p,p}^{s_p+2/p})})\|w\|_{\mathcal{L}^{p}([\alpha,\beta],\dot{B}_{p,p}^{s_p+2/p})}.
	\end{split}
	\enqq 
	We recall that $c_1$ is a small enough number such that 
	\beq
	Kc_1<\frac{1}{4}.
	\enq 
	And we claim that there exist $N$ real numbers $(T_i)_{1\leq i\leq N}$ such that $T_1=0$ and $T_N=T$, satisfying $[0,T]=\cup_{i=1}^{N-1}[T_i,T_{i+1}]$ and
	\beq
	\|g\|_{\mathcal{L}^p([T_i,T_{i+1}],\dot{B}_{p,p}^{s_p+\frac{2}{p}})}\leq \frac{1}{4K},~~\forall i\in\{i,1\dots,N-1\}
	\enq 
	Suppose that 
	\beqq\label{small}
	\|w_0\|_{\dot{B}^{s_p}_{p,p}}+\|f\|_{\mathcal{F}([0,T])}\leq \frac{1}{8KN(4K)^N}.
	\enqq 
	By time continuity we can define a maximal time $\tilde{T}\in \RR_+\cup\{\infty\}$ such that 
	\beq
	\|w\|_{\mathcal{L}^p([0,\tilde{T}],\dot{B}_{p,p}^{s_p+\frac{2}{p}})}\leq \frac{1}{4K}.
	\enq 
	If $\tilde{T}\geq T$ then the proposition is proved. Indeed, by \reff{p-estimate},  we have ,
	\beq
	\|w\|_{\mathcal{L}^p([T_i,T_{i+1}],\dot{B}_{p,p}^{s_p+\frac{2}{p}})}\leq K\|w(T_i)\|_{\dot{B}_{p,p}^{s_p}}+K\|f\|_{\mathcal{F}([T_i,T_{i+1}])}+\frac{3}{4}\|w\|_{\mathcal{L}^p([T_i,T_{i+1}],\dot{B}_{p,p}^{s_p+\frac{2}{p}})},
	\enq 
	which deduces that 
	\beq
	\|w\|_{\mathcal{L}^p([T_i,T_{i+1}],\dot{B}_{p,p}^{s_p+\frac{2}{p}})}\leq 4K(\|w(T_i)\|_{\dot{B}_{p,p}^{s_p}}+\|f\|_{\mathcal{F}([T_i,T_{i+1}])}).
	\enq 
	Hence according to \reff{p-estimate} ,
	\beq
	\|w\|_{\mathcal{L}^{\infty}([T_i,T_{i+1}],\dot{B}_{p,p}^{s_p})}&\leq& K\|w(T_i)\|_{\dot{B}_{p,p}^{s_p}}+K\|f\|_{\mathcal{F}([T_i,T_{i+1}])}+\frac{3}{4}\|w\|_{\mathcal{L}^p([T_i,T_{i+1}],\dot{B}_{p,p}^{s_p+\frac{2}{p}})}\\
	&\leq&4K(\|w(T_i)\|_{\dot{B}_{p,p}^{s_p}}+\|f\|_{\mathcal{F}([T_i,T_{i+1}])}).
	\enq
	Therefore,
	\beq
      \|w(T_{i})\|_{\dot{B}^{s_p}_{p,p}}\leq (4K)^{i-1}(\|w(0)\|_{\dot{B}_{p,p}^{s_p}}+\|f\|_{\mathcal{F}([0,T])}),
	\enq 
	which implies that 
	\beq
	\|w\|_{\mathcal{L}^p([T_i,T_{i+1}],\dot{B}_{p,p}^{s_p+\frac{2}{p}})}&\leq (4K)^{i}(\|w(0)\|_{\dot{B}_{p,p}^{s_p}}+\|f\|_{\mathcal{F}([0,T])}).
	\enq
	 Hence,
	\beq
	\|w\|_{\mathcal{L}^p([0,T],\dot{B}_{p,p}^{s_p+\frac{2}{p}})}\leq N(4K)^N(\|w(0)\|_{\dot{B}_{p,p}^{s_p}}+\|f\|_{\mathcal{F}([0,T])}).
	\enq 
	Take $N\sim \|g\|_{\mathcal{L}^p([0,T],\dot{B}_{p,p}^{s_p+\frac{2}{p}})}$, we have
	\beq 
	\|w\|_{\mathcal{L}^p([0,T],\dot{B}_{p,p}^{s_p+\frac{2}{p}})}\lesssim (\|w(0)\|_{\dot{B}_{p,p}^{s_p}}+\|f\|_{\mathcal{F}([0,T])})\mathrm{exp}(C\|g\|_{\mathcal{L}^p([0,T],\dot{B}_{p,p}^{s_p+\frac{2}{p}})}).
	\enq 
	And by \reff{p-estimate}, we have
	\beq
	\|w\|_{\mathcal{L}^{\infty}([0,T],\dot{B}^{s_p}_{p,p})}\lesssim (\|w_0\|_{\dot{B}^{s_p}_{p,p}}+\|f\|_{\mathcal{F}([0,T])})\mathrm{exp}(C\|g\|_{\mathcal{L}^p([0,T],\dot{B}_{p,p}^{s_p+\frac{2}{p}})}).
	\enq 
	Thus the proposition is proved in the case $\tilde{T}\geq T$.
	
	Now we turn to the proof in the case of $\tilde{T}<T$. We define an integer $K\in\{1,\dots,N-1\}$ such that 
	\beq
	T_k\leq \tilde{T}<T_{k+1}.
	\enq 
	Then for any $i\leq k-1$, we have
	\beq
	\|w\|_{\mathcal{L}^p([T_i,T_{i+1}],\dot{B}_{p,p}^{s_p+\frac{2}{p}})}&\leq (4K)^{i}(\|w(0)\|_{\dot{B}_{p,p}^{s_p}}+\|f\|_{\mathcal{F}([0,T])}),
	\enq
	and 
	\beq
      \|w(T_{i})\|_{\dot{B}^{s_p}_{p,p}}\leq (4K)^{i-1}(\|w(0)\|_{\dot{B}_{p,p}^{s_p}}+\|f\|_{\mathcal{F}([0,T])}).
	\enq 

    The same arguments as above also apply on the interval $[T_k,T]$ and yield,
    \beq
    \|w\|_{\mathcal{L}^{p}([T_k,T],\dot{B}_{p,p}^{s_p+\frac{2}{p}})}\leq (4K)^{N}\|w_0\|_{\dot{B}^{s_p}_{p,p}}+CNK^2\|f\|_{\mathcal{F}([0,T])},
    \enq 
    and 
    \beq
    \|w\|_{\mathcal{L}^{\infty}([T_k,T],\dot{B}^{s_p}_{p,p})}\leq (4K)^{N}\|w_0\|_{\dot{B}^{s_p}_{p,p}}+CNK^2\|f\|_{\mathcal{F}([0,T])}.
    \enq 
    Therefore we have 
    \beq
    \|w\|_{\mathcal{L}^p([0,T],\dot{B}^{s_p+\frac{2}{p}}_{p,p})}\leq N(4K)^N(\|w_0\|_{\dot{B}^{s_p}_{p,p}}+\|f\|_{\mathcal{F}([0,T])})<\frac{1}{4k},
    \enq 
    which contradicts to the maximality of $\tilde{T}$.

	\end{proof}

\section{Appendix}
\subsection{Some results on the steady-state Navier-Stokes equations}
In this part, we  recall some existence results on the steady state Navier-Stokes equations, and the Navier-Stokes equations equipped with the same time-independent external force. The steady state Navier-Stokes system is defined as follows,
\beq
(SNS)\left\{
  \begin{array}{ll}
   -\La U+U\cdot\nabla=f-\nabla\Pi,\\
   \nabla\cdot U=0,
  \end{array}
\right.
\enq 
where $f(x)$ is the external force defined on $\RR^3$.  Since we only care about the case of $U\in L^3$, we state the following result for $\La^{-1}f\in L^3$ without proof, which is a consequence of Theorem 2.2 in \cite{bbis2}.
\begin{proposition}\label{steady}
	There exists an absolute constant $\delta>0$ with the following property. If $f\in\mathcal{S}'$ satisfies $\La^{-1}f\in L^3(\RR^3)$ and
	\beq
	\|\La^{-1}f\|_{L^3(\RR^3)}<\delta,
	\enq
	then there exists a unique solution to (SNS) such that 
	\beq
	\|U\|_{L^3}\leq 2\|\La^{-1}f\|_{L^3}<2\delta.
	\enq
\end{proposition}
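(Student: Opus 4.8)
The plan is to recast (SNS) as a fixed‑point equation and solve it by a contraction argument in a small ball of $L^3$, in the same spirit as the small‑data theory for the evolution equation. First I would apply the Leray projection $\mathbb{P}=\mathrm{Id}+\nabla(-\La)^{-1}(\nabla\cdot)$ to the momentum equation in (SNS); since $\mathbb{P}$ annihilates gradients this removes the pressure term $\nabla\Pi$, and using $\nabla\cdot U=0$ one rewrites the convection term as $U\cdot\nabla U=\nabla\cdot(U\otimes U)$. Inverting the Laplacian then turns (SNS) into
\[
U=U_0+B(U,U),\qquad U_0:=-\mathbb{P}\La^{-1}f,\quad B(U,V):=\La^{-1}\mathbb{P}\nabla\cdot(U\otimes V),
\]
where $B$ is the stationary analogue of the bilinear operator appearing in \reff{Duhamel}. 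The pressure $\Pi$ is recovered at the end from the gradient part of the equation in the usual way, so it suffices to produce a divergence‑free $U\in L^3$ solving the displayed equation.

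The analytic heart of the matter is a single bilinear estimate,
\[
\|B(U,V)\|_{L^3}\le C_B\,\|U\|_{L^3}\|V\|_{L^3},
\]
which I would obtain by chaining three standard facts: Hölder's inequality gives $\|U\otimes V\|_{L^{3/2}}\le\|U\|_{L^3}\|V\|_{L^3}$; the Riesz transforms constituting $\mathbb{P}$ are bounded on $L^{3/2}$; and the operator $\La^{-1}\nabla\cdot$, being of order $-1$, maps $L^{3/2}$ into $L^3$ by Hardy--Littlewood--Sobolev (equivalently $|\nabla|^{-1}\colon L^{3/2}\to L^3$, since $\tfrac13=\tfrac23-\tfrac13$). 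The linear term is controlled by boundedness of $\mathbb{P}$ on $L^3$, $\|U_0\|_{L^3}\le C_0\|\La^{-1}f\|_{L^3}$. Every exponent here is forced by the Navier--Stokes scaling, so the estimate is exactly critical: there is no subcritical gain to lean on, and smallness of the data is what must drive the whole scheme.

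With these estimates in hand I would run the Banach fixed‑point theorem for $\Phi(U):=U_0+B(U,U)$ on the closed ball of radius $R:=2\|\La^{-1}f\|_{L^3}$ inside the space of divergence‑free $L^3$ vector fields (both $U_0$ and the range of $B$ are divergence‑free, so the ball is preserved). The self‑mapping condition $\|U_0\|_{L^3}+C_BR^2\le R$ and the contraction condition $2C_BR<1$ both hold once $\|\La^{-1}f\|_{L^3}<\delta$ with $\delta$ chosen small enough; the resulting fixed point then satisfies $\|U\|_{L^3}\le R=2\|\La^{-1}f\|_{L^3}$. Uniqueness in the ball is immediate from the contraction, and uniqueness among all small solutions follows by applying the same bilinear estimate to the difference $U-\tilde U=B(U-\tilde U,U)+B(\tilde U,U-\tilde U)$ of two solutions.

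The one genuinely delicate point is the constant bookkeeping needed to land the precise bound $\|U\|_{L^3}\le 2\|\La^{-1}f\|_{L^3}$. Because $L^3$ is critical, the scheme rests entirely on the product estimate above with no slack, and the factor $2$ forces one to weigh the projection norm $C_0=\|\mathbb{P}\|_{L^3\to L^3}$ against the size of the nonlinear correction; the cleanest route is to observe that only $\mathbb{P}\La^{-1}f$ enters, so that for a divergence‑free force one has $C_0=1$ and the self‑mapping inequality reduces to $4C_B\|\La^{-1}f\|_{L^3}\le 1$. This is exactly the content of Theorem 2.2 of \cite{bbis2}, from which the statement follows directly, with the threshold $\delta$ fixed precisely by the two inequalities above.
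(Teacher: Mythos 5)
Your proposal is correct in substance, but note what the paper actually does here: Proposition \ref{steady} is stated explicitly \emph{without proof}, as ``a consequence of Theorem 2.2 in \cite{bbis2}''. So your write-up differs from the paper simply by being a proof. The route you take is the standard one and all the individual steps are sound: recasting (SNS) as $U=-\mathbb{P}\La^{-1}f+\La^{-1}\mathbb{P}\nabla\cdot(U\otimes U)$ (the $L^3$ setting kills harmonic ambiguities when inverting $\La$), the critical bilinear bound via H\"older ($L^3\times L^3\to L^{3/2}$), Calder\'on--Zygmund boundedness of $\mathbb{P}$ on $L^{3/2}$, and the Riesz potential bound $|\nabla|^{-1}:L^{3/2}\to L^3$, followed by contraction on the ball of radius $2\|\La^{-1}f\|_{L^3}$; the difference argument for uniqueness among small solutions is also the expected one. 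What the paper's citation buys is generality (the result in \cite{bbis2} is proved for rougher forces and a range of exponents) and the exact constant in the conclusion, and that constant is the one soft spot in your argument: the statement does not assume $\nabla\cdot f=0$, so for general $f\in\mathcal{S}'$ your scheme yields $\|U\|_{L^3}\leq 2\|\mathbb{P}\La^{-1}f\|_{L^3}\leq 2C_0\|\La^{-1}f\|_{L^3}$ with $C_0=\|\mathbb{P}\|_{L^3\to L^3}$, and your reduction to $C_0=1$ is legitimate only for divergence-free $f$; otherwise one must either accept $2C_0$ in place of $2$ (harmless for every use of the proposition in the paper, where only smallness of $\|U_f\|_{L^3}$ is ever invoked, e.g.\ in Theorem \ref{well-posed-L3}) or control the projector norm, which you do not do. Since you flag exactly this point and close it by invoking Theorem 2.2 of \cite{bbis2} --- the same source the paper relies on for the whole statement --- the proposal stands as a more self-contained, if slightly less sharp, alternative to the paper's citation.
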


The following wellposedness result of the incompressible Navier-Stokes equation with a time-independent force is a special case of the wellposedness result of Theorem 2.7 in \cite{DW2}. Hence we recall it without proof.

\begin{theorem}\label{well-posed-L3}
	Suppose that $f$ is a time-independent external force such that $\|\La^{-1}f\|_{L^{3}}<c$, where $c<\delta$ is a universal small constant. Let  $U_f\in L^3(\RR^3)$ be the unique solution  to $(SNS)$ with $\|U_f\|_{L^{3}}<2\|\La^{-1}f\|_{L^3}$ (the existence of $U_f$ is provided by Proposition \ref{steady}) .Then we have
	\begin{enumerate}
	    \item For any initial data $u_0\in L^3(\RR^3)$, there exists a unique maximal time $T^*(u_0,f)>0$ and a unique solution  $u_f$ to $(NSf)$ with initial data $u_0$ such that for any $T<T^*(u_0,f)$,
	          \beq
	          u_f\in C([0,T],L^3(\RR^3)).
	          \enq 
	          Moreover there exists a constant $\delta_2(f)$ such that if $\|u_0-U\|_{L^3}<\delta_2$, then $u_0\in C_0(\RR_+,L^{3}(\RR^3))$. The solution $u_f$ satisfies that for $3<p<5$ 
	          \beqq\label{uf-Uf}
	          \lim_{T\to\ T^*(u_0,f)}\|u_f-U_f\|_{\mathbb{L}^{p:\infty}_p(T)}=\infty.
	          \enqq  
	          \item Let $u_f\in C(\RR_+,L^3(\RR^3))$ with initial data $u_0\in L^3(\RR^3)$. Then $u_f\in L^{\infty}(\RR_+,L^3(\RR^3))$ and $u_f-U_f\in\mathbb{L}^{r_0:\infty}_{p}(\infty)$ for some $r_0>2$ and $p>3$, and 	         \beq
	          \lim_{t\to\infty}\|u_f-U\|_{\dot{B}^{s_p}_{p,p}}=0.
	          \enq	     
	\end{enumerate}
\end{theorem}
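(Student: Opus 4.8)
The plan is to recast the forced problem as a perturbation of the small stationary field $U_f$ and to reduce everything to the global perturbation estimate of Proposition \ref{large-linear}. Writing $v:=u_f-U_f$ and using that $U_f$ solves $(SNS)$, the difference $u_f\cdot\nabla u_f-U_f\cdot\nabla U_f$ equals $v\cdot\nabla v+v\cdot\nabla U_f+U_f\cdot\nabla v$, so after applying $\mathbb{P}$ the field $v$ solves the autonomous perturbation system
\[
\p_t v-\La v+\tfrac12 Q(v,v)+Q(v,U_f)=0,\qquad v|_{t=0}=u_0-U_f,
\]
which is exactly the equation treated in Proposition \ref{large-linear} with drift $g=0$, force $f=0$ and $U=U_f$. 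The crucial feature is that the time-independent linear term $Q(v,U_f)$ carries no smallness in time and can only be absorbed through the smallness of $U_f$. By Proposition \ref{steady} one has $\|U_f\|_{L^3}<2\|\La^{-1}f\|_{L^3}<2c$, and the Besov product law together with heat smoothing gives $\|Q(h,U_f)\|_{\mathcal{L}^p([0,T],\dot{B}^{s_p+\frac2p-2}_{p,p})}\lesssim\|U_f\|_{L^3}\,\|h\|_{\mathcal{L}^p([0,T],\dot{B}^{s_p+\frac2p}_{p,p})}$, so for $c$ small the hypothesis $\|Q(h,U_f)\|<c_1\|h\|$ of Proposition \ref{large-linear} holds \emph{uniformly in $T$}.

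For Part (1), local existence and uniqueness follow from a standard Kato-type fixed point for the mild formulation of the system above in the space $\mathbb{L}^{p:\infty}_p(T)$ with $3<p<5$. The bilinear term is bounded on this space by the product law plus heat dissipation, while the linear drift contributes a factor $\lesssim\|U_f\|_{L^3}$, hence a strict contraction once $c$ is small. For arbitrary $u_0\in L^3$ one first makes $\|e^{t\La}(u_0-U_f)\|_{\mathbb{L}^{p:\infty}_p(T)}$ small by shrinking $T$ (splitting the data into a smooth part plus an $L^3$-small part), which produces the maximal time $T^*(u_0,f)$ and $u_f\in C([0,T],L^3)$ for every $T<T^*$. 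The persistence statement $u_f\in C_0(\RR_+,L^3)$ for data with $\|u_0-U\|_{L^3}<\delta_2$ is the small-data global case of the same fixed point: there $v_0$ is itself small, so Proposition \ref{large-linear} applies directly on $[0,\infty)$.

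The blow-up alternative \reff{uf-Uf} is the contrapositive of the continuation criterion built into the fixed point: the local time of existence is bounded below in terms of $\|v\|_{\mathbb{L}^{p:\infty}_p}$ on the current interval, so if this norm stayed finite as $T\nearrow T^*(u_0,f)$ one could restart the iteration past $T^*$, contradicting maximality; hence $\|u_f-U_f\|_{\mathbb{L}^{p:\infty}_p(T)}\to\infty$. For Part (2), if $u_f\in C(\RR_+,L^3)$ is global I would first establish a global Kato bound $v\in\mathbb{L}^{r_0:\infty}_p(\infty)$ for some $r_0>2$: continuity in $L^3$ gives $\sup_t\|u_f\|_{L^3}<\infty$, and a splitting/energy argument shows that on a half-line $[T_0,\infty)$ the solution enters the small-data regime, where Proposition \ref{large-linear} yields a bound on $\|v\|_{\mathbb{L}^{p:\infty}_p([T_0,T])}$ that is uniform in $T$, whence finiteness on all of $[0,\infty)$. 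The decay $\|u_f-U_f\|_{\dot{B}^{s_p}_{p,p}}\to0$ then follows by feeding this global integrability back into the Duhamel formula: the tail of the Kato norm is small and $\|e^{(t-s)\La}\cdot\|$ dissipates, forcing $\|v(t)\|_{\dot{B}^{s_p}_{p,p}}\to0$.

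The main obstacle is precisely the time-independent drift $Q(v,U_f)$: since it gains no smallness from short time, the whole local and global theory rests on absorbing it through the $L^3$-smallness of $U_f$, which is why the quantitative hypothesis $\|\La^{-1}f\|_{L^3}<c$ (feeding into the constant $c_1$ of Proposition \ref{large-linear}) cannot be dropped. The second genuinely delicate point is upgrading mere finiteness of the global Kato norm into the pointwise-in-time convergence $\|u_f-U_f\|_{\dot{B}^{s_p}_{p,p}}\to0$, where one must exclude a nonvanishing asymptotic profile; here the exponential global estimate of Proposition \ref{large-linear} combined with heat-kernel dissipation does the job, with no recourse to backward uniqueness.
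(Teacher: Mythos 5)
The first thing to note is that the paper does not prove this statement at all: Theorem \ref{well-posed-L3} is explicitly recalled \emph{without proof} as a special case of Theorem 2.7 of \cite{DW2}, so there is no internal argument to compare yours against. Your overall strategy --- pass to $v=u_f-U_f$, which solves $\p_t v-\La v+\tfrac12 Q(v,v)+Q(v,U_f)=0$, absorb the time-independent drift $Q(\cdot,U_f)$ through the $L^3$-smallness of $U_f$, and run a Kato-type fixed point plus a continuation alternative --- is the natural route, and it is consistent with how the rest of the paper uses the theorem (the same smallness mechanism appears in Theorem \ref{examples} and in the hypothesis of Proposition \ref{large-linear}). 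For Part (1) your sketch is essentially correct, with the usual caveat that in a critical space the local existence time is \emph{not} bounded below in terms of $\|v\|$ or $\|v_0\|_{L^3}$ alone, so the blow-up alternative \reff{uf-Uf} must be obtained from smallness of the tail of the $\mathbb{L}^{p:\infty}_p$ norm near $T^*$ (allowing the iteration to restart) rather than from a norm-dependent lower bound on the existence time; this is a standard and repairable rephrasing.

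The genuine gap is in Part (2). You write that ``continuity in $L^3$ gives $\sup_t\|u_f\|_{L^3}<\infty$'': this is false --- $u_f\in C(\RR_+,L^3)$ bounds the norm only on compact time intervals, and the uniform-in-time bound $u_f\in L^{\infty}(\RR_+,L^3)$ is precisely one of the \emph{conclusions} of Part (2), not a consequence of continuity. The same circularity infects the next step: asserting that the solution ``enters the small-data regime'' on some half-line $[T_0,\infty)$ presupposes decay or smallness at infinity, which is again what is to be proved. The actual content of Part (2) is the forced analogue of the Gallagher--Iftimie--Planchon theorem \cite{gip2}: one splits $u_0-U_f$ into a piece small in $L^3$ (which by the Part (1) theory generates a global small perturbation) plus a piece in $L^2$, proves \emph{global energy estimates} for the $L^2$-part of the perturbation --- where the drift terms involving $U_f$, such as $\int (z\cdot\nabla U_f)\cdot z$, must be absorbed into the dissipation using only $\|U_f\|_{L^3}<2c$ together with $\dot H^1\hookrightarrow L^6$ and H\"older --- and only then deduces the finite global Kato norm $u_f-U_f\in\mathbb{L}^{r_0:\infty}_{p}(\infty)$, the $L^{\infty}(\RR_+,L^3)$ bound, and finally the decay $\|u_f(t)-U_f\|_{\dot{B}^{s_p}_{p,p}}\to0$ by feeding the global bound back into the Duhamel formula. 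Your proposal names the right tools but places them in the wrong logical order; as written, Part (2) assumes its own conclusion.
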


\subsection{Product laws and heat estimates}
   We first recall the following standard product laws in Besov space, which use the theory of para-products (for details, see \cite{C2,gip2}).  
   \begin{proposition}\label{product-law}
\begin{enumerate}
	\item 

	Let $p>3$, $q>3$ and $r>2$. Moreover assume that $s_q+s_p+\frac{2}{r}>0$.	We have, for any $|\eps|<1$ such that $1-\frac{2}{r}+\eps >0$,
		      \beq
		      \|vw\|_{\mathcal{L}^r([0,T],\dot{B}_{p,p}^{s_p+\frac{2}{r}-1})}\leq C(\eps)\|v\|_{\mathcal{L}^{\infty}([0,T],\dot{B}_{q,q}^{s_q+\eps})}\|w\|_{\mathcal{L}^{r}([0,T],\dot{B}_{p,p}^{s_p+\frac{2}{r}-\eps})}.
		      \enq 
    \item let $p>3$ and $2<r<\frac{2p}{p-3}$. Then for any $\eps\in\RR $ such that $1-\frac{2}{r}-|\eps|>0$, we have
          \beq
          \|vw\|_{\mathcal{L}^{\frac{2}{r}}([0,T],\dot{B}_{p,p}^{s_p+\frac{4}{r}-1})}\leq C(\eps)\|v\|_{\mathcal{L}^{r}([0,T],\dot{B}_{p,p}^{s_p+\frac{2}{r}+\eps })}\|w\|_{\mathcal{L}^{r}([0,T],\dot{B}_{p,p}^{s_p+\frac{2}{r}-\eps })}.
          \enq 
    \item Let $p_1,p_2\in (3,\infty)$, $2<r<\frac{2p}{p-3}$ and $T\in\RR_+\cup\{\infty\}$. Suppose that $v\in\mathbb{L}^{r;\infty}_{p_1}(T)$ and $w\in\mathbb{L}^{r;\infty}_{p_2}(T)$. Then we have 
	\beq
	\|vw\|_{\mathcal{L}^{\frac{r}{2}}([0,T],\dot{B}_{p,p}^{s_p+\frac{4}{r}-1})}\lesssim\|v\|_{\mathbb{L}^{r;\infty}_{p_1}(T)}\|w\|_{\mathbb{L}^{r;\infty}_{p_2}(T)},
	\enq
	where $\frac{1}{p}=\frac{1}{p_1}+\frac{1}{p_2}$.
    \item  Let $p>3$.  Suppose that $w\in L^{\infty}([0,T],L^{3})$ and $v\in\mathcal{L}^{r_0}([0,T];\dot{B}_{p,p}^{s_p+\frac{2}{r_0}})$ for some $T\in\RR_+\cup\{+\infty\}$ with $r_0=\frac{2p}{p-1}$, then we have 
	\beq
	\|vw\|_{\mathcal{L}^{r_0}([0,T],\dot{B}^{s_{\bar{p}}+\frac{2}{r_0}-1}_{\bar{p},\bar{p}})}\leq C(p)\|w\|_{L^{\infty}([0,T],L^{3})}\|v\|_{\mathcal{L}^{r_0}([0,T];\dot{B}_{p,p}^{s_p+\frac{2}{r_0}})},
	\enq 
	where $\frac{1}{\bar{p}}=\frac{1}{3}+\frac{1}{6p}$ and $C(p)\to\infty$ as $p\to\infty$.
    \end{enumerate}
\end{proposition}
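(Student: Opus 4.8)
The plan is to prove all four estimates from a single mechanism: Bony's paraproduct decomposition
\[
vw = T_v w + T_w v + R(v,w),
\]
where $T_v w=\sum_j S_{j-1}v\,\La_j w$ and $R(v,w)=\sum_{|j-k|\le1}\La_j v\,\La_k w$, combined with dyadic estimation in the Chemin--Lerner spaces $\mathcal{L}^\rho([0,T],\dot B^s_{p,q})$. The two recurring tools are Bernstein's inequalities $\|\La_k u\|_{L^{p_2}}\lesssim 2^{3k(1/p_1-1/p_2)}\|\La_k u\|_{L^{p_1}}$ (valid for $p_1\le p_2$, used to change Lebesgue exponents and, for the low-frequency factors, to pass from $L^p$ or $L^q$ to $L^\infty$) and Hölder in the time variable to split the time-integrability exponent across the two factors. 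Because $S_{j-1}v\,\La_j w$ is spectrally supported in an annulus of size $2^j$ while $R(v,w)$ spreads over all frequencies $\gtrsim 2^j$, the two paraproducts are controlled by geometric sums over the low frequencies of one factor, whereas the remainder converges only when the \emph{sum} of the two regularity indices is positive. Since these are classical paraproduct bounds, I would cite \cite{C2,gip2} for the single-block estimates and only carry out the exponent bookkeeping.

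First I would record how the side conditions enter item (1). Writing $\alpha:=s_q+\eps$ for $v$ and $\beta:=s_p+\tfrac2r-\eps$ for $w$, the Besov embeddings $\dot B^{\alpha}_{q,q}\hookrightarrow\dot B^{\alpha-3/q}_{\infty,q}=\dot B^{\eps-1}_{\infty,q}$ and $\dot B^{\beta}_{p,p}\hookrightarrow\dot B^{\beta-3/p}_{\infty,p}=\dot B^{2/r-\eps-1}_{\infty,p}$ turn each paraproduct into a low-frequency factor of $L^\infty$-type. For $T_v w$ the low factor has regularity $\eps-1$, so it closes with output regularity exactly $s_p+\tfrac2r-1$ provided $\eps<1$; for $T_w v$ the low factor has regularity $2/r-\eps-1$, so it closes provided $1-\tfrac2r+\eps>0$; and the Bernstein loss $2^{3j/q}=2^{j(s_q+1)}$ incurred in raising the remainder back to the exponent $p$ produces a convergent geometric series precisely when $\alpha+\beta=s_q+s_p+\tfrac2r>0$. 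These three thresholds are exactly the hypotheses of item (1).

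The remaining items are variants of the same computation. Item (2) is the symmetric case $q=p$ with indices $s_p+\tfrac2r\pm\eps$ and time-Hölder $\tfrac2r=\tfrac1r+\tfrac1r$: the remainder has total regularity $2s_p+\tfrac4r$, positive iff $s_p+\tfrac2r>0$, i.e. iff $r<\tfrac{2p}{p-3}$; both paraproducts close under $1-\tfrac2r-|\eps|>0$. Item (3) repeats item (2) but with two spatial exponents $p_1,p_2$ tied by $\tfrac1p=\tfrac1{p_1}+\tfrac1{p_2}$ via a spatial Hölder step, delivering the $\mathbb L^{r;\infty}$ norms on the right. Item (4) I would treat as the endpoint in which $w$ sits at regularity $0$ in $L^3$ (so $\dot B^{0}_{3,\cdot}\hookrightarrow\dot B^{-1}_{\infty,\cdot}$ gives a legitimate negative-regularity low factor), with the Lebesgue relation $\tfrac1{\bar p}=\tfrac13+\tfrac1{6p}$ absorbing the mismatch between the Hölder exponent $\tfrac1p+\tfrac13$ of the product and the target $\bar p$ through one more Bernstein step. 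Here the remainder has total regularity $s_p+\tfrac2{r_0}=\tfrac2p$ when $r_0=\tfrac{2p}{p-1}$; this is positive but shrinks to $0$ as $p\to\infty$, so the geometric-series ratio $2^{-2/p}\to1$ and its sum $\sim\tfrac1{1-2^{-2/p}}\sim p$ diverges, which is exactly the mechanism forcing $C(p)\to\infty$.

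The step I expect to be the main obstacle is the Lebesgue-exponent accounting, rather than the regularity count. In each paraproduct and remainder piece one must verify that Bernstein is invoked only in the admissible direction $p_1\le p_2$ — which is delicate in items (1) and (4) where $q$ may exceed $p$ and where $\bar p<3$ — and that the frequency sums are genuinely geometric, not merely $\ell^1$-summable, since it is precisely the marginal regimes ($\eps\to1$ in (1), $r\to\tfrac{2p}{p-3}$ in (2), $p\to\infty$ in (4)) that govern the constants $C(\eps)$ and $C(p)$. Keeping the time exponents consistent through the Hölder splittings inside the Chemin--Lerner norms is the secondary technical point, but it follows the template of \cite{gip2}.
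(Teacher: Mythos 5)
Your plan coincides with the paper's own proof: the paper likewise dismisses items (1)--(3) as standard (citing \cite{C2,gip2}) and proves only item (4), using exactly your mechanism --- Bony decomposition, H\"older with auxiliary Lebesgue exponents to land in $L^{\bar p}$, the embedding $L^{3}\hookrightarrow \dot{B}^{0}_{3,3}\hookrightarrow\dot{B}^{s_q}_{q,q}$ to give the $L^3$ factor a (slightly negative) regularity, and geometric sums whose ratios degenerate like $2^{-c/p}$, which is what produces $C(p)\sim p\to\infty$. The only difference is bookkeeping: with the paper's explicit auxiliary exponents ($q_1=\frac{6p}{2p-5}$, $p_1=4p$, $q=\frac{12p}{4p-1}$, and the embedding $\dot{B}^{s_{\tilde p}+\frac{2}{r_0}-1}_{\tilde p,\tilde p}\hookrightarrow\dot{B}^{s_{\bar p}+\frac{2}{r_0}-1}_{\bar p,\bar p}$ for the remainder) the $O(p)$ loss shows up in the paraproduct $T_v w$ as well as in the remainder, whereas you attribute it to the remainder alone; this is immaterial to the argument.
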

Since the first three results in the proposition are standard and well-known, which can be found in \cite{C2,gip2}, we only give the proof of the last of the proposition. 
\begin{proof}
    For simplicity, we treat $w$ and $v$ as functions. We have 
	\beq
	\La_j wv=\La_j T_w v+\La_j T_v w+\La_j R(u,v).
	\enq 
	We first take $q_1$ such that $\frac{1}{\bar{p}}=\frac{1}{p}+\frac{1}{q_1}=\frac{1}{3}+\frac{1}{6p}$ implying that $q_1=\frac{6p}{2p-5}>3$. 
	
		About $\La_j T_w v$, we have
	\beq
	\|\La_j T_w v\|_{L^{r_0}(L^{\bar{p}})}\lesssim\|(S_j w)(\La_j v)\|_{L^{r_0}(L^{\bar{p}})}\lesssim\|S_j w\|_{L^\infty(L^{q_1})}\|v\|_{L^{r_0}(L^p)}.
	\enq 
	And we notice that
	\beq
    \|S_j w\|_{L^\infty(L^{q_1})}\lesssim\sum_{j'\leq j}\|\La_j w\|_{L^\infty(L^{q_1})}\lesssim \sum_{j'\leq j}2^{-j's_{q_1}}c_{j,q_1}\|w\|_{\mathcal{L}^{\infty}([0,T],\dot{B}_{q_1,q_1}^{s_{q_1}})},
	\enq 
	and
	\beq
	\|v\|_{L^{r_0}(L^p)}\lesssim 2^{-j(s_p+\frac{2}{r_0})}c_{j,p}\|v\|_{\mathcal{L}^{r_0}([0,T];\dot{B}_{p,p}^{s_p+\frac{2}{r_0}})}.
	\enq 
	Since $s_{q_1}<0$, we have
	\beq
	\|2^{j(s_{\bar{p}}+\frac{2}{r_0}-1)}\|\La_j T_w v\|_{L^{r_0}(L^{\bar{p}})}\|_{\ell^{\bar{p}}}\lesssim \|w\|_{\mathcal{L}^{\infty}([0,T],\dot{B}_{q_1,q_1}^{s_{q_1}})}\|v\|_{\mathcal{L}^{r_0}([0,T];\dot{B}_{p,p}^{s_p+\frac{2}{r_0}})}.
	\enq  
	This combined with $L^{3}\hookrightarrow \dot{B}^{0}_{3,3}\hookrightarrow\dot{B}^{s_{q_1}}_{q_1,q_1}$, implies that
	\beqq\label{twv}
	\|2^{j(s_{\bar{p}}+\frac{2}{r_0}-1)}\|\La_j T_w v\|_{L^{r_0}(L^{\bar{p}})}\|_{\ell^{\bar{p}}}\lesssim \|w\|_{L^{\infty}([0,T],L^{3})}\|v\|_{\mathcal{L}^{r_0}([0,T];\dot{B}_{p,p}^{s_p+\frac{2}{r_0}})}.
	\enqq 
	
	Now we choose  $q:=\frac{12p}{4p-1}$ and $p_1:=4p$. It is easy to check such that $\frac{1}{\bar{p}}=\frac{1}{p_1}+\frac{1}{q}=\frac{1}{3}+\frac{1}{6p}$ . We notice that 
	\beq
	\|\La_j T_v w\|_{L^{r_0}(L^{\bar{p}})}\lesssim\|S_j v\|_{L^{r_0}(L^{p_1})}\|\La_j w\|_{L^{\infty}(L^q)},
	\enq 
	and
	\beq
	\|S_j v\|_{L^{r_0}(L^{p_1})}&\lesssim&\sum_{j'\leq j}\|\La_j v\|_{L^{r_0}(L^{p_1})}\lesssim 2^{-j'(s_{p_1}+\frac{2}{r_0})}c_{j',p}\|v\|_{\mathcal{L}^{r_0}([0,T];\dot{B}_{p_1,p}^{s_{p_1}+\frac{2}{r_0}})}\\
	&\lesssim&2^{-j'(s_{p_1}+\frac{2}{r_0})}c_{j',p}\|v\|_{\mathcal{L}^{r_0}([0,T];\dot{B}_{p,p}^{s_{p}+\frac{2}{r_0}})},
	\enq 
	and 
	\beq
	\|\La_j w\|_{L^{\infty}(L^q)}\lesssim 2^{-js_q}c_{j,q}\|w\|_{\mathcal{L}^{\infty}([0,T],\dot{B}_{q,q}^{s_{q}})}.
	\enq 
	Since  $s_{p_1}+\frac{2}{r_0}=-1+\frac{3}{4p}+1-\frac{1}{p}=-\frac{1}{4p} <0$,  we have 
	\beqq\label{tvw}
	\|2^{j(s_{\bar{p}}+\frac{2}{r_0}-1)}\|\La_j T_w v\|_{L^{r_0}(L^{\bar{p}})}\|_{\ell^{\bar{p}}}\lesssim p \|w\|_{\mathcal{L}^{\infty}([0,T],\dot{B}_{q,q}^{s_{q}})}\|v\|_{\mathcal{L}^{r_0}([0,T];\dot{B}_{p,p}^{s_{p}+\frac{2}{r_0}})}.
	\enqq 
	Again by Lemma  $L^{3}\hookrightarrow \dot{B}^{0}_{3,3}\hookrightarrow\dot{B}^{s_{q}}_{q,q}$, we have
	\beqq\label{tvw}
	\|2^{j(s_{\bar{p}}+\frac{2}{r_0}-1)}\|\La_j T_w v\|_{L^{r_0}(L^{\bar{p}})}\|_{\ell^{\bar{p}}}\lesssim p \|w\|_{L^{\infty}([0,T],L^{3})}\|v\|_{\mathcal{L}^{r_0}([0,T];\dot{B}_{p,p}^{s_{p}+\frac{2}{r_0}})}.
	\enqq 
	Now we turn to the remainder $\La_j R(w,v)$. We denote that $\frac{1}{\tilde{p}}:=\frac{1}{p}+\frac{1}{q}=\frac{1}{3}+\frac{11}{12p}$.
	Since 
	\beq
	\begin{aligned}
	\|\La_j R(w,v)&\|_{L^{r_0}(L^{\tilde{p}})}\lesssim\sum_{k\geq j-1}\|\La_k w\|_{L^{\infty}(L^q)}\|\La_k v\|_{L^{r_0}(L^p)}\\
	&\lesssim\sum_{k\geq j}2^{-k(s_q+s_p+\frac{2}{r_0})}c_{k,q}c_{k,p}\|w\|_{\mathcal{L}^{\infty}([0,T],\dot{B}_{q,q}^{s_{q}})}\|v\|_{\mathcal{L}^{r_0}([0,T];\dot{B}_{p,p}^{s_{p}+\frac{2}{r_0}})},
	\end{aligned}
	\enq 
	and  
	\beq
	s_p+s_q+\frac{2}{r_0}=\frac{7}{4p}>0
	\enq 
	 we have that, by applying $L^{3}\hookrightarrow \dot{B}^{0}_{3,3}\hookrightarrow\dot{B}^{s_{q}}_{q,q}$,
	\beq
	\begin{aligned}
	\|2^{j(s_{\tilde{p}}+\frac{2}{r_0}-1)}\|\La_j R(w,v)\|_{L^{r_0}(L^{\tilde{p}})}\|_{\ell^{\tilde{p}}}\lesssim&p\|w\|_{\mathcal{L}^{\infty}([0,T],\dot{B}_{q,q}^{s_{q}})}\|v\|_{\mathcal{L}^{r_0}([0,T];\dot{B}_{p,p}^{s_{p}+\frac{2}{r_0}})}\\
	\lesssim &p\|w\|_{L^{\infty}([0,T],L^{3})}\|v\|_{\mathcal{L}^{r_0}([0,T];\dot{B}_{p,p}^{s_{p}+\frac{2}{r_0}})}
	\end{aligned}
	\enq 

	which is $R(w,v)\in\mathcal{L}^{r_0}([0,T];\dot{B}_{\tilde{p},\tilde{p}}^{s_{\tilde{p}}+\frac{2}{r_0}-1})$.\\
	And we have $R(w,v)\in\mathcal{L}^{r_0}([0,T];\dot{B}_{\bar{p},\bar{p}}^{s_{\bar{p}}+\frac{2}{r_0}-1})$, as $\tilde{p}<\bar{p}$. Combining with \reff{twv} and \reff{tvw} we get
	\beq
	\|w v\|_{\mathcal{L}^{r_0}([0,T];\dot{B}_{\bar{p},\bar{p}}^{s_{\bar{p}}+\frac{2}{r_0}-1})}\leq C(p) \|w\|_{L^{\infty}([0,T],L^{3})}\|v\|_{\mathcal{L}^{r_0}([0,T];\dot{B}_{p,p}^{s_{p}+\frac{2}{r_0}})},
	\enq 
	where $C(p)\to\infty$ as $p\to\infty$.
	The proposition is proved.
\end{proof}
Now let us recall the following standard heat estimate. For any $p\in[1,\infty]$, there exists some $c_0,c>0$ such that for any $f\in\mathcal{S}'$ and $j\in\mathbb{Z}$,
\beq
\|\La_{j}(e^{t\La }f)\|_{L^p}\leq c_0e^{-ct2^{2j}}\|\La_j f\|_{L^{p}}.
\enq 
Hence for $0<t\leq \infty$, recalling 
\beq
B(u,v):=\int_0^t e^{(t-s)\La }\mathbb{P}\nabla\cdot(u(s)\otimes v(s))ds,
\enq 
Young's inequality for convolutions implies that for any $\tilde{r}\in[r,\infty]$
\beqq
\|B(u,v)\|_{\mathcal{L}^{\tilde{r}}([0,T],\dot{B}_{p,p}^{s+2+2(\frac{1}{\tilde{r}}-\frac{1}{r})})}\lesssim\|u\otimes v\|_{\mathcal{L}^r([0,T],\dot{B}_{p,p}^{s+1})}.
\enqq

And we recall that $B$ is a bounded operator from $L^{\infty}([0,T],L^{3,\infty})\times L^{\infty}([0,T],L^{3,\infty})$ to $L^{\infty}([0,T],L^{3,\infty})$ for any $T\in \RR_+\cup\{+\infty\}$ (see \cite{bbis2})

\subsection*{Acknowledgement}
The author is grateful to Isabelle Gallagher for sharing many hindsights about the Navier-Stokes equation and entertaining discussions for overcoming the difficulties during this research.


\end{document}